\newtheorem{de}{Definition}
\newtheorem{pro}{Proposition}
\newtheorem{teo}{Theorem}
\newtheorem{rem}{Remark}
\newtheorem{lem}{Lemma}
\newtheorem{conj}{Conjecture}
\newcommand{\co}{{\mathcal O}}
\newcommand{\gp}{\mathbb{P}}
\newcommand{\gz}{\mathbb{Z}}
\newcommand{\gr}{\mathbb{R}}
\newcommand{\gc}{\mathbb{C}}
\newcommand{\gn}{\mathbb{N}}
\newcommand{\ck}{\mathcal K}
\newcommand{\cf}{{\mathcal F}}
\newcommand{\findemo}{$\ \ \square$}
\renewcommand{\int}{{\rm int}}
\newcommand{\pic}{{\rm Pic}}
\title{Lins Neto's examples of foliations and the Mori cone of blow-ups of $\gp^2$}
\author{F.  Monserrat}
\address{Departamento de Matemática Aplicada, Universidad Politécnica de Valencia\\ Camino de Vera s/n\\ 46022 Valencia, Spain}
\email{framonde@mat.upv.es}
\thanks{Supported by Spain Ministry of Education
 MTM2007-64704}
\subjclass[2000]{14C20; 14J25; 32S65}
\date{}
\begin{document}

\maketitle

\begin{abstract}
We use a family of algebraic foliations given by A. Lins Neto to
provide new evidences to a conjecture, related to the
Harbourne-Hirschowitz's one and implying the Nagata's conjecture,
which concerns the structure of the Mori cone of blow-ups of $\gp^2$
at very general points.

\end{abstract}

\section{Introduction}

The Harbourne-Hirschowitz conjecture predicts the solution to the
problem of determining the dimension of every linear system of
curves of $\gp^2$ (the projective plane over an algebraically
closed field, which we shall assume to be $\gc$) with assigned
multiplicities at general points. This conjecture goes back to
Segre \cite{seg} and it has been reformulated by several authors
\cite{harb1, gim1, hir}. We are interested in the following weaker
form \cite{harb2}:

\begin{conj}\label{conj1}
Let $X$ be the blow-up of $\gp^2$ at a finite set of points in
very general position. Then, every integral curve $C$ on $X$ with
negative self-intersection is a $(-1)$-curve of $X$ (that is,
smooth, rational and such that $C^2=-1$).
\end{conj}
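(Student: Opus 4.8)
The plan is to argue by contraposition and to use the geometry of foliations to force any hypothetical ``bad'' curve to be an algebraic leaf of a foliation that, by construction, has none. Suppose $X$ is the blow-up of $\gp^2$ at a very general set of $n$ points and that $C\subset X$ is an integral curve with $C^2<0$ which is not a $(-1)$-curve. Because $\pic(X)\cong\gz^{\,n+1}$ is discrete, the class $D=[C]$ ranges over a countable set, and ``very general'' means avoiding countably many proper closed subsets of the configuration space $(\gp^2)^n$. Hence it suffices to prove, for each fixed class $D$ with $D^2<0$ that is not a $(-1)$-class, that the locus of configurations for which $D$ is represented by an integral non-$(-1)$ curve is not dense. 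I would therefore fix $D$ and try to exhibit a single well-chosen configuration, together with a foliation, that excludes it; upper-semicontinuity of $h^0$ and the behaviour of $p_a$ under flat specialization then transport the conclusion back to the very general point.

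The engine is the tangency inequality for foliations. If $\cf$ is a foliation on a smooth projective surface $X$ with canonical class $K_{\cf}$, and $C$ is a reduced irreducible curve that is \emph{not} $\cf$-invariant, then the tangency divisor is effective and
\[
\deg\,\mathrm{tang}(\cf,C)=K_{\cf}\cdot C+C^2\;\ge\;0 .
\]
Consequently, any curve with $K_{\cf}\cdot C+C^2<0$ must be $\cf$-invariant. This is where Lins Neto's family enters: its very general member $\cf_t$ has no algebraic leaf in $\gp^2$, so after lifting $\cf_t$ to the blow-up $X_{12}$ of its $12$ singular points the only $\cf_t$-invariant curves are the $12$ exceptional divisors, which are themselves $(-1)$-curves. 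Hence any \emph{other} integral curve $C$ on $X_{12}$ is non-invariant and therefore satisfies $K_{\cf_t}\cdot C+C^2\ge 0$; for $C^2<0$ this yields $K_{\cf_t}\cdot C>0$, and combining this lower bound with the adjunction identity $2p_a(C)-2=K_X\cdot C+C^2$ and the explicit form of $K_{\cf_t}$ on $X_{12}$ should force $p_a(C)=0$ and $C^2=-1$, i.e.\ that $C$ is a $(-1)$-curve.

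To promote this from the distinguished $12$-point surface to the very general configuration, I would degenerate the $n$ points to a configuration adapted to a leaf-free foliation and track the class $D$. The delicate point is that an integral curve in class $D$ at the very general point may specialize to a reducible or non-reduced divisor; I would control this by choosing the degeneration inside the Severi-type stratum where $D$ stays represented by an integral curve, or, failing that, by arguing that some component of the flat limit is a non-exceptional $\cf_t$-invariant curve, which is excluded by the tangency inequality together with leaf-freeness. Semicontinuity then closes the loop, showing the bad locus for $D$ is proper and, ranging over the countably many $D$, that the very general $X$ has no non-$(-1)$ negative curve.

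The hard part --- and the reason this strategy stops short of a full proof --- is that Lins Neto's pencil furnishes a leaf-free foliation only for the single configuration $n=12$ in degree $4$, whereas the conjecture quantifies over all $n$. For arbitrary $n$ one needs a comparably rich supply of foliations on blow-ups of $\gp^2$ with no algebraic leaves (or at least with $K_{\cf}$ dominating every candidate bad class $D$), and manufacturing such foliations is essentially equivalent to bounding the self-intersection of negative curves directly. In other words, the foliation machinery converts Conjecture~\ref{conj1} into the problem of producing, for each $n$, foliations whose canonical class is positive enough on all would-be negative curves; for the Lins Neto configuration this positivity is a theorem, but in general it is exactly the Nagata-type input that remains open. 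I expect that obstacle, rather than any step above, to be where the argument must ultimately invoke genuinely new ideas.
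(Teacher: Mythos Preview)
The statement you are trying to prove is labeled \textbf{Conjecture} in the paper, and it remains open: the paper does not prove it. What the paper actually does is reformulate it as Conjecture~\ref{conj2} and then provide \emph{evidence} for it (Theorems~\ref{gordo} and~\ref{gordo2}), by exhibiting infinitely many rays in $\partial\overline{NE}(X_n)\cap\partial Q(X_n)$ in the region $[K_{X_n}]_{\ge 0}$, coming from the Lins Neto pencils. So there is no ``paper's own proof'' to compare against; your proposal should be read as a strategy toward an open problem.

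That said, your sketch has concrete errors beyond the gap you flag at the end. First, it is not true that the very general member of Lins Neto's degree~$4$ family has no algebraic leaf: as recalled in Section~\ref{3}, all foliations in $\Upsilon^4$ share nine invariant lines (indeed the twelve dicritical points are their intersections). What fails for $\alpha\notin E^4$ is the existence of a \emph{rational first integral}, not the existence of algebraic invariant curves. Second, for the radial (dicritical) singularities of $\Upsilon^4$ the exceptional divisors on $X_{12}$ are \emph{not} invariant for the transformed foliation; the dicritical picture is precisely that the leaves are transverse to the exceptional curve. So your description of the invariant locus on $X_{12}$ is wrong in both directions, and the tangency inequality $K_{\cf}\cdot C+C^2\ge 0$ no longer isolates the exceptional divisors. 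Third, even granting that inequality, your passage from $K_{\cf_t}\cdot C>0$ plus adjunction to $p_a(C)=0$ and $C^2=-1$ is asserted rather than argued; there is no evident algebraic identity linking $K_{\cf_t}$ and $K_X$ tightly enough to force this.

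In short: the paper uses the Lins Neto families in the opposite direction from what you propose --- it exploits the members \emph{with} rational first integral (those with $\alpha\in E^d$) to produce nef divisors $D_{d,\alpha}$ with $D_{d,\alpha}^2=0$, and then transports these to the very general blow-up by semicontinuity. That yields structural information about $\overline{NE}(X_n)$ compatible with the conjecture, but not a proof of it.
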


Recall that a property is satisfied for $n$ closed points of $\gp^2$
in {\it very general position} if it holds for all $n$-uples
$(p_1,p_2,\ldots,p_n)$ belonging to the complement of a countable
union of proper closed subvarieties of $(\gp^2)^n$.

It is known that the statement of Conjecture \ref{conj1} is
satisfied by the rational curves and also by any curve whose image
on $\gp^2$ has a singularity of multiplicity 2 at one of the
centers of the blow-up \cite{defernex}.

There are several results giving evidence to the
Harbourne-Hirschowitz conjecture (\cite{ah}, \cite{harb3},
\cite{cil}, \cite{cil2}, \cite{ev}, \cite{yang}, \cite{dum} and
\cite{mons} among many others). In the same spirit, the objective of
this note will be to provide evidences to Conjecture \ref{conj1}. To
do that, we shall consider other equivalent formulation (Conjecture
\ref{conj2}) that predicts the structure of the closure of the Mori
cone, $\overline{NE}(X)$, associated with a blow-up $X$ of $\gp^2$
at very general points. In a more precise form, it states that any
generator of extremal ray of $\overline{NE}(X)$ having non-negative
intersection product with the canonical class $K_X$ of $X$ has null
self-intersection. It is known that, when the number $n$ of blown-up
points is greater than 1, the extremal rays of $\overline{NE}(X)$ in
the half-space $(K_X\cdot z<0)$ are exactly those generated by the
classes of the $(-1)$-curves and when $n\geq 9$ there are infinitely
many of them \cite{nag}. When $n\leq 9$ the conjecture is true (see
Remark \ref{bbb}) but very little is known on the intersection
$\overline{NE}(X) \cap (K_X\cdot z\geq 0)$ when $n>9$.

An interesting aspect is that Conjecture \ref{conj1} implies
Nagata's conjecture \cite{nag1} and, therefore, there is a
connection with the symplectic packing problem in dimension four
\cite{mac}.

In section \ref{4}, we show an explicit family of smooth rational
projective surfaces $X$ satisfying that the set of faces of the cone
$NE(X)$ meeting the region $(K_X\cdot z=0)$ (resp., $(K_X\cdot z>
0)$) is not finite (see Proposition \ref{teo1}). Under the
assumption that $X$ is the blow-up of $\gp^2$ at a set of $n\geq 12$
(resp., $n\geq 37$) points in very general position, we prove, also
in Section \ref{4}, a behavior of the cone $NE(X)$ which agree with
Conjecture \ref{conj2}. More specifically, on the one hand Theorem
\ref{gordo} (see also Remark \ref{remgordo}) shows the existence of
infinitely many rays which are contained in the boundary of
$\overline{NE}(X)$ and are generated by elements in the region
$(K_X\cdot z=0)$ (resp., $(K_X\cdot z> 0)$) with null
self-intersection; on the other hand Theorem \ref{gordo2} shows
that, when $n\geq 37$, the set of the above mentioned rays which are
in $\partial \overline{NE}(X)\cap (K_X\cdot z> 0)$ correspond to
infinitely many orbits of the action of the Cremona group. The
proofs of these theorems use the examples of one-parametric families
of algebraically integrable plane foliations provided by Lins Neto
in \cite{l-n} in relation with the so-called Poincaré and Painlevé
problems. For this reason we devote Section \ref{3} to summarize the
necessary background on foliations, to state the results of
\cite{l-n} that we shall use and to prove a key fact in our
development (Proposition \ref{aaaa}). In Section \ref{2} we briefly
summarize basic definitions and facts on the Mori cone and we use
them to state Conjecture \ref{conj2}.

We notice that, although the Lins Neto's families of foliations
provide negative answers with respect to the Poincaré and Painlevé
problems, in this paper they have revealed to be useful to show a
positive result with respect to the Harbourne-Hirschowitz
conjecture.

\section{The Mori cone and the conjecture}\label{2}

Let $X$ be a smooth projective surface and let
$A(X):=(\pic(X)/\equiv)\otimes \gr$, where $\equiv$ denotes
numerical equivalence. $A(X)$ is a real vector space whose
dimension is $\rho(X):={\rm rk}\; \pic(X)$. We shall assume that
$\rho(X)\geq 3$. The {\it Mori cone} of $X$ (also called
Kleiman-Mori cone or cone of curves), which we shall denote by
$NE(X)$, is defined to be the convex cone of $A(X)$ generated by
the images of the effective classes in $\pic(X)$; its closure with
respect to the real topology will be denoted by
$\overline{NE}(X)$. The $\gz$-bilinear form $\pic(X)\times
\pic(X)\rightarrow \gz$ given by Intersection Theory induces a
non-degenerated $\gr$-bilinear pairing $A(X) \times A(X)
\rightarrow \gr$. For each pair $(x,y)\in A(X)\times A(X)$,
$x\cdot y$ will denote its image by this bilinear form, for each
divisor $D$ on $X$, $[D]$ will be its image in $A(X)$ and, for
each real number $\alpha$, $[D]_{>\alpha}$ (resp., $[D]_{\geq
\alpha}$, $[D]_{<\alpha}$, $[D]_{\leq \alpha}$, $[D]^{\perp}$)
will denote the set of those $x\in A(X)$ such that $[D]\cdot
x>\alpha$ (resp., $\geq \alpha$, $<\alpha$,  $\leq \alpha$, $=0$).

Recall that, if $C$ is a convex cone of $A(X)$, a {\it face} of
$C$ is a sub-cone $F\subseteq C$ such that $a+b\in F$ implies that
$a,b\in F$, for all pair of elements $a,b\in C$. The
$1$-dimensional faces of $C$ are also called {\it extremal rays}
of $C$.

Fix an ample divisor $H$ on $X$. By Kleiman's ampleness criterion,
$[H]\cdot x>0$ for all $x\in \overline{NE}(X)\setminus \{0\}$ and,
hence, the cone $\overline{NE}(X)$ is strongly convex. This
implies that it is generated by its extremal rays. Consider the
cone
$$Q(X)=\{x\in A(X)\mid x^2\geq 0,\;\; [H]\cdot x\geq 0\}.$$
One has that $Q(X)\subseteq \overline{NE}(X)$
\cite[II.4.12.1]{kollar1} and, therefore, the extremal rays of
$\overline{NE}(X)$ must be spanned by elements $x\in A(X)$ such that
$x^2\leq 0$. Moreover, the extremal rays of $\overline{NE}(X)$ which
are not in $Q(X)$ are spanned by classes of integral curves $C$ with
$C^2<0$ \cite[II.4.12.3]{kollar1}. As a consequence of the Mori cone
theorem (see \cite[III.1]{kollar1} for instance) the extremal rays
of $\overline{NE}(X)$ meeting the region $[K_X]_{<0}$ are exactly
those spanned by the images in $A(X)$ of the $(-1)$-curves.
Moreover, if $C$ is an integral curve on $X$ such that $C^2<0$ then
$[C]$ generates an extremal ray of $\overline{NE}(X)$
\cite[II.4.12.2]{kollar1}. These considerations allow us to state
the following equivalent formulation of Conjecture \ref{conj1}
(which holds trivially when $\rho(X)<3$):

\begin{conj}\label{conj2}
If $X$ is the blow-up of $\gp^2$ at a finite set of points in very
general position then the extremal rays of the cone
$\overline{NE}(X)$ in the region $[K_X]_{\geq 0}$ are contained in
$\partial Q(X)$.
\end{conj}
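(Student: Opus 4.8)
The plan is to verify that Conjecture \ref{conj2} is logically equivalent to Conjecture \ref{conj1}, which is the equivalence asserted in the sentence preceding the statement. Fix $X$, the blow-up of $\gp^2$ at $n$ very general points; if $n\leq 2$ then $\rho(X)<3$ and, as noted, both statements are vacuous, so I assume $\rho(X)\geq 3$. I would first record two preliminary observations. Firstly, by the Hodge index theorem the intersection form on $A(X)$ has signature $(1,\rho(X)-1)$, so $\{x\in A(X)\mid x^2\geq 0\}$ is a solid quadric cone with two nappes and the condition $[H]\cdot x\geq 0$ selects the nappe containing $[H]$; consequently $\partial Q(X)=\{x\in A(X)\mid x^2=0,\ [H]\cdot x\geq 0\}$, the hyperplane $[H]^{\perp}$ meeting $\{x^2\geq 0\}$ only at the origin. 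Secondly, $[H]\in\int Q(X)$, so $Q(X)$ is full-dimensional, and from $Q(X)\subseteq\overline{NE}(X)$ one gets $\int Q(X)\subseteq\int\overline{NE}(X)$; since an extremal ray of the full-dimensional strongly convex cone $\overline{NE}(X)$ is a proper face and hence cannot meet its interior, any extremal ray of $\overline{NE}(X)$ contained in $Q(X)$ is automatically contained in $\partial Q(X)$.

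For the implication Conjecture \ref{conj1} $\Rightarrow$ Conjecture \ref{conj2}, I would take an extremal ray $R$ of $\overline{NE}(X)$ with $R\subseteq[K_X]_{\geq 0}$ and apply the dichotomy recalled in the text: either $R\subseteq Q(X)$, and then $R\subseteq\partial Q(X)$ by the second observation; or $R$ is spanned by the class $[C]$ of an integral curve $C$ with $C^2<0$, in which case Conjecture \ref{conj1} forces $C$ to be a $(-1)$-curve, adjunction gives $C\cdot K_X=-1<0$, and this contradicts $R\subseteq[K_X]_{\geq 0}$. Hence only the first case survives and $R\subseteq\partial Q(X)$, as required.

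For the converse, Conjecture \ref{conj2} $\Rightarrow$ Conjecture \ref{conj1}, I would start from an integral curve $C$ on $X$ with $C^2<0$; then $[C]$ spans an extremal ray $R$ of $\overline{NE}(X)$. If $C\cdot K_X\geq 0$, then $R\subseteq[K_X]_{\geq 0}$, so Conjecture \ref{conj2} together with the first observation forces $C^2=0$, a contradiction; therefore $C\cdot K_X<0$, so $R$ meets $[K_X]_{<0}$ and, by the Mori cone theorem, $R$ is also spanned by the class $[E]$ of some $(-1)$-curve $E$. Writing $[C]=\lambda[E]$ with $\lambda>0$ gives $C\cdot E=\lambda E^2=-\lambda<0$, which is impossible for two distinct integral curves; thus $C=E$ is a $(-1)$-curve.

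I do not expect a genuine obstacle here: the argument is a bookkeeping exercise combining the quoted consequences of the Mori cone theorem and of \cite{kollar1} with the signature of the intersection form. The only points needing a little care are the exact description of $\partial Q(X)$, which rests on the Hodge index theorem, and the final uniqueness step, where one uses that two distinct integral curves intersect non-negatively to rule out $[C]$ being a positive multiple of a $(-1)$-curve class. \findemo
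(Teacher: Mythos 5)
Your proposal is correct and follows essentially the same route the paper intends: the equivalence of Conjecture \ref{conj2} with Conjecture \ref{conj1} is exactly the assembly of the facts quoted from \cite{kollar1} (extremal rays outside $Q(X)$ are spanned by integral curves of negative self-intersection, such curves span extremal rays, and the Mori cone theorem in the region $[K_X]_{<0}$), together with the Hodge-index description of $\partial Q(X)$ that the paper itself uses later. Your only additions are the explicit bookkeeping steps (extremal rays avoid $\int Q(X)$, and the final identification $C=E$ via $C\cdot E\geq 0$ for distinct integral curves), which are sound.
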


\begin{rem}\label{bbb}

{\rm Notice that Conjecture \ref{conj2} is true when $\rho(X)\leq
10$; in fact, the equality $\overline{NE}(X)\cap [K_X]_{\geq 0}=
Q(X)\cap [K_X]_{\geq 0}$ holds in this case. Indeed, if $\rho(X)\leq
9$ then $Q(X)\cap [K_X]_{\geq 0}= \overline{NE}(X)\cap [K_X]_{\geq
0}=\{0\}$ because $-K_X$ is ample. If $\rho(X)=10$ then one has that
$\overline{NE}(X)\cap [K_X]_{\geq 0}\subseteq [K_X]^{\perp}\cap
Q(X)=Q(X)\cap [K_X]_{\geq 0}$, where the inclusion holds because
$-K_X$ is nef and there are not integral curves with negative
self-intersection whose images belong to $[K_X]^{\perp}$, and the
equality follows from the proof of \cite[Cor. 1.ii]{galmon}.

}
\end{rem}

\begin{rem}
{\rm Conjecture \ref{conj1} is not equivalent to the fact that the
equality $\overline{NE}(X)\cap [K_X]_{\geq 0}= Q(X)\cap [K_X]_{\geq
0}$ holds for whichever surface $X$ obtained by blowing-up $\gp^2$
at a finite set of points in very general position. Although a
reformulation of Conjecture 1 in these terms is given in
\cite{defernex}, its author (in a private communication  to me and
in a note which will appear elsewhere) asserts that this
reformulation is only correct when $\rho(X)\leq 11$; in fact, he
shows that  the inclussion $Q(X)\cap [K_X]_{\geq 0}\subseteq
\overline{NE}(X)\cap [K_X]_{\geq 0}$ is strict whenever $\rho(X)>11$
(independently of any conjecture).

}
\end{rem}

\section{Families of algebraically integrable foliations}\label{3}

With the exceptions of Lemma \ref{lema1} and Proposition
\ref{aaaa}, this section is expository and its aim is, on the one
hand, to summarize some basics facts concerning foliations (see
\cite{brunella} and \cite{G-M}) and, on the other hand, to state
the results of \cite{l-n} that we shall use to obtain the main
results of the paper.

Let $X$ be a smooth projective surface defined over $\gc$ and let
$\Theta_X$ be its associated tangent sheaf. An {\it algebraic
foliation with singularities} (foliation, in the sequel) ${\mathcal
F}$ on $X$ is given by an open covering $\{U_j\}_{j\in J}$ of $X$
and vector fields $v_j\in H^0(U_j,\Theta_X)$ with isolated zeroes
such that $v_i=g_{ij}v_j$ on $U_i\cap U_j$, where $g_{ij}\in
H^0(U_i\cap U_j, \co^*_X)$ for all $i,j\in J$. A closed point $p\in
X$ is a {\it singular point} of $\cf$ if it is a zero of $v_j$ for
some $j\in J$. Notice that the set of singular points is finite.
Given $p\in X$, a {\it separatrix} of $\cf$ at $p$ will be an
irreducible germ $f\in \co^{hol}_{X,p}$ (where $\co_X^{hol}$ denotes
the sheaf of holomorphic functions) such that $v_j(f)$ is a multiple
of $f$, if $p\in U_j$. An {\it algebraic invariant curve} $C$ will
be an integral curve on $X$ such that the irreducible components of
its germ at each point $p\in C$ (viewed as an element of
$\co^{hol}_{X,p}$) are separatrices at $p$.

If $p$ is a singular point of $\cf$ and $p\in U_j$, we shall say
that $p$ is a {\it non-degenerated} singularity if the jacobian
matrix $Dv_j(p)$ is non-singular. In this case, if $\lambda_1$ and
$\lambda_2$ denote the eigenvalues of $Dv_j(p)$, the quotients
$\lambda_1/\lambda_2$ and $\lambda_2/\lambda_1$ are called {\it
characteristic numbers} of the singularity and they are analytic
invariants. A singular point $p$ is called a {\it reduced} (resp.,
{\it dicritical}) singularity if their characteristic numbers are
not positive rational numbers (resp., there exist infinitely many
separatrices passing thorough $p$). If $p$ is a singularity of
$\cf$ such that the separatrices of $\cf$ at $p$ are given by the
levels of a meromorphic function of the type $\frac{u^a}{v^b}$ for
certain local analytic coordinates $(u,v)$, then we say that $\cf$
{\it has a local meromorphic first integral} at $p$ of the type
$\frac{u^a}{v^b}$.

Given a vector field $p(x,y)\frac{\partial}{\partial x} +
q(x,y)\frac{\partial}{\partial y}$, where $p$ and $q$ are
polynomials on $\gc^2$, it can be extended to a unique foliation
$\cf$ of $\gp^2$. The singular points of $\cf$ in the affine chart
$\gc^2$ are the common zeroes of $p$ and $q$. Moreover, there exists
a positive integer $d$ such that the above vector field can be
written in the form
$$a(x,y)\frac{\partial}{\partial x}+b(x,y)\frac{\partial}{\partial
y}+g(x,y)\left(x \frac{\partial}{\partial x}+y
\frac{\partial}{\partial y}\right)$$ where, either $a,b$ are
polynomials of degree at most $d$ and $g$ is a homogeneous
polynomial of degree $d$, or $g\equiv 0$, $\max\{ \deg(a),\deg(b)
\}=d$ and the homogeneous parts of $a$ and $b$ of degree $d$ are
not of the form $x\cdot h$ and $y\cdot h$ respectively. The
integer $d$ is the {\it degree} of $\cf$, and it is also the
number of tangencies of $\cf$ with a generic line, linearly
embedded in $\gp^2$.

A foliation $\cf$ of $\gp^2$ {\it has a rational first integral}
(or it is {\it algebraically integrable}) if there exists a
rational map $R: \gp^2 \cdots \rightarrow \gp^1$ such that the
irreducible components of the closures of its fibers are algebraic
invariant curves of $\cf$. Taking homogeneous coordinates
$[X,Y,Z]$ on $\gp^2$, the map $R$ is defined by two homogeneous
polynomials $F,G\in \gc[X,Y,Z]$ of the same degree $m$ which can
be taken in such a way that general fibers of $R$ are irreducible;
the {\it degree} of the first integral $R$ is defined to be $m$.
Hence, the foliation $\cf$ determines a unique irreducible pencil
(i.e., with irreducible general fibers) of plane curves ${\mathcal
P}_{\cf}:=\langle F,G \rangle \subseteq H^0(\gp^2,\co_{\gp^2}(d))$
given by the levels of the rational function $R=\frac{F}{G}$;
moreover, the integral components of the curves in ${\mathcal
P}_{\cf}$ are exactly all the algebraic invariant curves.

A {\it configuration} over $\gp^2$ will be a finite sequence
${\ck}=(p_1,p_2,\ldots,p_n)$ of closed points such that $p_1$
belongs to $X_1:=\gp^2$ and, inductively, if $i\geq 1$ then $p_i$
belongs to the blow-up $X_i$ of $X_{i-1}$ at $p_{i-1}$. Also, we
shall denote by $\pi_{\ck}:Z_{\ck} \rightarrow \gp^2$ the morphism
given by the composition of all the successive blow-ups centered
at the points of $\ck$.

Consider a non-degenerated foliation $\cf$ (that is, a foliation
whose singularities are non-degenerated) of $\gp^2$.  Seidenberg's
result of reduction of singularities \cite{seid} proves the
existence of a sequence of blow-ups $X_{n+1} \mathop
{\longrightarrow} \limits^{\pi _{n} } X_{n} \mathop
{\longrightarrow} \limits^{\pi _{n-1} }  \cdots \mathop
{\longrightarrow} \limits^{\pi _2 } X_2 \mathop {\longrightarrow}
\limits^{\pi _1 } X_1 : = \gp^2$ and foliations $\cf_i$ on $X_i$
($\cf_1=\cf$ and the remaining ones are successive transforms of $
\cf$) such that $\cf_{n+1}$ has only reduced singularities. If, in
addition, $\cf$ has a rational first integral, $R$, then the
non-reduced singularities are exactly the dicritical ones;
moreover, elementary calculations using the local equations of the
blow-up show that the Seidenberg's reduction process coincides
with the minimal composition of point blow-ups $\pi_{\cf}: Z_{\cf}
\rightarrow \gp^2$ eliminating the indeterminacies of $R$. The
morphism $\pi_{\cf}$ and the configuration given by the sequence
of centers of the blow-ups used to get it, which we shall denote
by ${\mathcal B}_{\cf}$, are essentially unique because different
admissible (in the obvious way) arrangements of the points give
rise to $\gp^2$-isomorphic surfaces. ${\mathcal B}_{\cf}$  is
called {\it configuration of base points} of ${\mathcal P}_{\cf}$.

\begin{de}\label{def1}
{\rm  A {\it one-parametric family of foliations} of $\gp^2$  will
be a set $\{\cf_{\alpha}\}_{\alpha \in U}$, where $U$ is a
connected open subset of $\gc$ and $\cf_{\alpha}$ are foliations
of $\gp^2$ which extend polynomial vector fields
$a_{\alpha}(x,y)\frac{\partial}{\partial
x}+b_{\alpha}(x,y)\frac{\partial}{\partial y}$ on $\gc^2$ such
that the coefficients of  $a_{\alpha}(x,y)$ and $b_{\alpha}(x,y)$
are functions of $\alpha$ which are holomorphic in $U$. }
\end{de}

In \cite{l-n}, Lins Neto defines, for all integers $d\geq 2$, a
one-parametric familiy of foliations of degree $d$, $\Upsilon^d=\{
\cf^d_{\alpha} \}_{\alpha \in \gc \setminus A^d}$, where $A^d$ is a
finite subset of $\gc$. For all $d\geq 2$, the family $\Upsilon^{d}$
has non-degenerated singularities of fixed analytic type \cite[Def.
1]{l-n} and all the foliations in the family have the same
dicritical singularities. Moreover, there exists a dense countable
subset $E^d\subseteq \gc\setminus A^d$ such that $\cf_{\alpha}^d$
has a rational first integral for all $\alpha \in E^d$. If $d\in
\{2,3,4\}$ and $\alpha\in E^d$, then the general algebraic invariant
curves of $\cf_{\alpha}^d$ are elliptic curves. If $d\geq 5$, then
the following property is satisfied: for any $k>0$ the set $\{\alpha
\in E^d \mid \mbox{ the genus of a general algebraic invariant curve
by $\cf^d_{\alpha}$ is }\leq k\}$ is finite. We point out here that, althouth
the existence of the sets $E^d$ is proved,
they are not explicitly described in \cite{l-n}. Next, we shall
summarize other properties of these families that will be of
interest for us (see \cite{l-n} for complete details).

With respect to the family $\Upsilon^4$, there are 9 lines which are
common invariant curves of all the foliations of the family and the
dicritical singularities are the 12 points of intersection among
these lines (see \cite[Fig. 1]{l-n}). Moreover, they are of radial
type, that is, they have a local meromorphic first integral of the
type $u/v$ for certain local coordinates $(u,v)$.

The family $\Upsilon^3=\{\cf_{\alpha}^3\}_{\alpha \in
\mathbb{C}\setminus \{0,1,j,j^2\}}$ is such that
$\cf_{\alpha}^3={\mathcal G}_{\alpha}$, where ${\mathcal
G}_{\alpha}$ is the foliation satisfying the following property:
$\cf_{\alpha}^4$ is the pull-back of ${\mathcal G}_{\alpha}$ by the
map $T(x,y)=(x+y,x\cdot y)$ \cite[Sect. 2.3]{l-n}. The foliations in
$\Upsilon^3$ have 5 common invariant curves: 2 conics and 3 lines.
The 8 dicritical singularities are the points of intersection among
theses curves \cite[Fig. 4]{l-n}. Three of them are of radial type
and the remaining ones have a meromorphic first integral of the type
$u^2/v$.

The foliations in $\Upsilon^2$ are obtained from those in
$\Upsilon^3$ by a Cremona transformation \cite[Sect. 2.4]{l-n}. All
of them have 2 common invariant curves: a quartic $Q$ and a line
$R$. There are 5 dicritical singularities. Two of them (say $M$ and
$N$) are smooth points of $Q$ and are points of tangency of $Q$ and
$R$; they have a meromorhic first integral of the type $u^2/v$. The
remaining ones (say $J$, $K$ and $L$) are cuspidal points of $Q$
(see \cite[Fig. 7]{l-n}) and have a meromorphic first integral of
the type $u^3/v^2$.

The foliations in $\Upsilon^{3r-1}$ (with $r\geq 2$) are obtained by
pulling-back those in $\Upsilon^2$ by certain rational maps
$F:\gp^2\rightarrow \gp^2$ (see \cite[Sect. 3.1]{l-n}). The
dicritical singularities are those in $F^{-1}(\{J,K,L,M,N\})$. $J,
K$ and $L$ are not critical values of $F$ and $F^{-1}(\{J,K,L\})$
contains $3r^2$ singularities with meromorphic first integral of the
type $u^3/v^2$. $M$ and $N$ are critical values of $F$ and
$F^{-1}(\{M,N\})$ contains $2r$ singularities with meromorphic first
integral of the type $u^r/v^2$.

In the same way as above, the foliations in $\Upsilon^{3r}$ (with
$r\geq 2$) are obtained by pulling-back those in the family
$\Upsilon^3$ by certain rational maps $\gp^2\rightarrow \gp^2$ (see
\cite[Sect. 3.2]{l-n}). The dicritical singularities are the
pre-images of the dicritical singularities of the foliations in
$\Upsilon^3$. The analytic types of these singularities are the
following ones: one radial singularity, $3r^2$ singularities with
local meromorphic first integral of the type $u^2/v$, $2r$
singularities with local meromorphic first integral of the type
$u^r/v$ and $2r$ more with meromorphic first integral of the type
$u^r/v^2.$

The foliations in $\Upsilon^{3r+1}$ (with $r\geq 2$) are obtained
also by pulling-back those in $\Upsilon^4$ by certain rational maps
$\gp^2\rightarrow \gp^2$. The dicritical singularities are the
pre-images of those of the foliations in $\Upsilon^4$ and they have
the following analytic types: $3r^2+3$ radial singularities and $6r$
singularities with local meromorphic first integral of the type
$u^r/v$.

\begin{rem}\label{nuevanota}
{\rm Although all the above foliations $\cf^d_{\alpha}$ have local
meromorphic first integrals at their dicritical singular points,
only those corresponding to indices $\alpha$ in $E^d$ have a
rational first integral. We notice that, for $\alpha\in E^d$, the
knowledge of the type of the local meromorphic first integrals of
$\cf_{\alpha}^d$ is far to provide sufficient information to recover
the rational first integral of $\cf_{\alpha}^d$, even the
singularity types of the generic algebraic invariant curves at the
dicritical singular points. Indeed, if $\cf^d_{\alpha}$ (with
$\alpha\in E^d$)  has a local meromorphic first integral of the type
$u^{\rho}/v^{\delta}$ (with $\rho$ and $\delta$ relatively primes)
at a dicritical point $p$ we are saying that the germs at $p$ of the
curves of the pencil ${\mathcal P}_{\cf^d_{\alpha}}$ are
$s(\lambda_1 P(u^{\rho},v^{\delta})+\lambda_2
Q(u^{\rho},v^{\delta}))$, $(\lambda_1,\lambda_2)\in
\mathbb{C}^2\setminus \{(0,0)\}$, where $s$ is a unit of
$\co_{\gp^2,p}^{hol}$ and $P$ and $Q$ are homogeneous polynomials of
the same degree, say $k_p$ (see \cite[Section 2.9]{julio} for
instance). To determine the singularity types of the generic curves
of the pencil ${\mathcal P}_{\cf^d_{\alpha}}$ one needs to know the
set of values $k_p$, which is essentially an equivalent datum to the
degree of the rational first integral (see \cite[Th. 3.7]{galmon3}).
Poincaré considered, in \cite{poi2, poi3}, the following classical
problem: to obtain a bound of such a degree in terms of the degree
of the foliation. It is well-known that, in general, it is not
possible to find that bound (even if the analytic types of the
singularities of the foliation are given, as Lins Neto proves in
\cite{l-n} using the mentioned families).

}
\end{rem}

Now, we shall see that all the foliations of a given one-parametric
family $\Upsilon^d$ have a common resolution of singularities. For
this purpose, we shall prove the following

\begin{lem}\label{lema1}
Let $\{\cf_{\alpha}\}_{ \alpha \in U}$ be a one-parametric family
of foliations of $\gp^2$, $U$ being a connected open subset of
$\gc$. Let $p$ be a point of $\gp^2$. Suppose that:
\begin{itemize}
\item[(a)] All the foliations $\cf_{\alpha}$ have a
non-degenerated singularity at $p$ of the same analytic type.

\item[(b)] The characteristic numbers of $\cf_{\alpha}$, $\alpha
\in U$, at $p$ are rational and positive, say $a/b$ and $b/a$,
where $a,b\in \gn$ are relatively primes.

\item[(c)] All the foliations $\cf_{\alpha}$ have two common
separatrices through $p$.
\end{itemize}
Then,  the configuration of infinitely near points involved in the
resolution of the singularity of $\cf_{\alpha}$ at $p$ is constant
for all $\alpha\in U$ (that is, the singularities at $p$ of all
the foliations in the family  have a common resolution).
\end{lem}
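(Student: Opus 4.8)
The plan is to reduce everything to a local analytic statement and then track the Seidenberg reduction process blow-up by blow-up, showing at each stage that the combinatorics of the resolution depend only on data that is fixed along the family by hypotheses (a)--(c). First I would pass to a local holomorphic chart around $p$ and write $\cf_\alpha$ near $p$ as the foliation given by a vector field $v_\alpha$ with $v_\alpha(p)=0$ and $Dv_\alpha(p)$ nonsingular. By (b) the eigenvalues of $Dv_\alpha(p)$ have ratio $a/b$ with $a,b$ coprime positive integers, and by (a) this analytic type is the same for every $\alpha$; in particular, one can choose local coordinates in which the linear part is diagonalized as $a\,u\,\partial_u + b\,v\,\partial_v + (\text{higher order})$, with the two coordinate axes $\{u=0\}$ and $\{v=0\}$ being the two common separatrices guaranteed by (c). The key observation is that the resolution of a singularity with characteristic number $a/b$ is governed by a classical, purely combinatorial recipe (the Euclidean-algorithm / Hirzebruch--Jung type continued fraction expansion of $a/b$): one blows up $p$, and the unique singular point of the strict transform on the exceptional divisor that is not a reduced singularity again has a characteristic number determined arithmetically from $(a,b)$, and so on, terminating exactly when the resolution reaches reduced singularities.

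Next I would make this recipe explicit enough to see that it does not see $\alpha$. After the first blow-up at $p$, in the chart $u = u$, $v = uv'$ the transformed vector field has, along the exceptional line, singularities at $v'=0$ (on the strict transform of $\{v=0\}$), at $v'=\infty$ (on the strict transform of $\{u=0\}$), and possibly at other points coming from the higher-order terms. The crucial point is that, because $p$ is dicritical precisely when $a/b$ is a positive rational (and here it is), the two separatrices $\{u=0\}$ and $\{v=0\}$ persist as separatrices of the transform, and the only non-reduced singularity that needs further blowing up sits at one of the two intersection points of the exceptional divisor with the strict transforms of the separatrices; its characteristic number is $|a-b|/\min(a,b)$ (or its reciprocal), again a rational number built only from $a$ and $b$. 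Since $a$ and $b$ are the same for all $\alpha\in U$, and the two separatrices being blown up are the same for all $\alpha\in U$ by (c), the location (as a point infinitely near $p$) of the next center is the same for all $\alpha$, and the analytic type of the new singularity — determined by its characteristic number — is again constant in $\alpha$. Here I would invoke hypothesis (a) together with the holomorphic dependence from Definition \ref{def1} to guarantee that no extra, $\alpha$-dependent, non-reduced singularity appears from the higher-order terms: the non-degenerate singularities of fixed analytic type away from the separatrices are reduced (their characteristic numbers are not positive rationals) and hence are not blown up in Seidenberg's process.

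The induction then closes: after finitely many steps — the number of steps being the length of the continued fraction expansion of $a/b$, hence independent of $\alpha$ — every singularity over $p$ is reduced, and at each step the center blown up was a point infinitely near $p$ whose position in the configuration tree and whose analytic type were the same for every $\alpha\in U$. Therefore the configuration of infinitely near points appearing in the resolution of the singularity of $\cf_\alpha$ at $p$ is independent of $\alpha$, which is the assertion. The main obstacle I anticipate is the bookkeeping at each blow-up: one must verify carefully that (i) aside from the controlled singularity on the separatrix chain, all other singularities of the transformed foliation along the new exceptional divisor are reduced (so Seidenberg does not blow them up further) and that (ii) these "extra" singularities, even if not blown up, do not migrate or change the infinitely-near structure in an $\alpha$-dependent way. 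Both are handled by combining the fixed-analytic-type hypothesis (a) with the standard fact that reduced singularities are stable and are terminal for the reduction algorithm, and by using the coprimality of $a$ and $b$ to ensure the continued-fraction recursion is well-defined and terminates; the holomorphic dependence on $\alpha$ from Definition \ref{def1} ensures that this local analysis can be performed uniformly.
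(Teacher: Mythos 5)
Your overall strategy (localize at $p$, use the local meromorphic first integral $u^a/v^b$ and the Euclidean/continued-fraction structure of its resolution, and observe that the arithmetic data $(a,b)$ is constant in $\alpha$) is essentially the paper's, and most of the bookkeeping you flag at the end is indeed routine. But there is a genuine gap at the one step that is the actual heart of the proof. After the first blow-up, the unique non-reduced (dicritical) point $p_2(\alpha)$ on the exceptional divisor lies, as you say, at one of the two points $e_1,e_2$ determined by the tangent directions of the two common separatrices of (c). You then conclude that ``the location of the next center is the same for all $\alpha$'' because the separatrices are the same for all $\alpha$; this does not follow. When $a\neq b$ the two separatrices play asymmetric roles (one carries the tangent direction of the generic member of the pencil $\lambda u_\alpha^a+\mu v_\alpha^b$, the other is the special member $v_\alpha^b=0$), and hypothesis (c) gives no control over which common separatrix plays which role for a given $\alpha$: a priori $p_2(\alpha)=e_1$ for some parameters and $p_2(\alpha)=e_2$ for others, because the coordinates $(u_\alpha,v_\alpha)$ --- hence the assignment of the exponents $a$ and $b$ to the two fixed tangent directions --- vary with $\alpha$. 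Your appeal to the holomorphic dependence of Definition~\ref{def1} is only invoked to control extra singularities coming from higher-order terms, not to exclude this discrete jump, so your induction never gets started.

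The paper closes exactly this gap, and it is where connectedness of $U$ and holomorphy are genuinely needed: for $i=1,2$ one takes the holomorphic function $f_i(\alpha)$ giving a characteristic number of the transformed foliation at $e_i$, sets $g_i(\alpha)=\bigl(f_i(\alpha)-\tfrac{a}{b-a}\bigr)\bigl(f_i(\alpha)-\tfrac{b-a}{a}\bigr)$, and observes that $g_1g_2\equiv 0$ while $g_1(\alpha)-g_2(\alpha)\neq 0$ for every $\alpha$ (exactly one of $e_1,e_2$ is the dicritical point, the other being a reduced singularity); by the identity theorem one of the $g_i$ vanishes identically, so $p_2(\alpha)=e_{i_0}$ for all $\alpha\in U$. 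Some argument of this type (a discrete invariant that is shown to be locally constant on a connected set) is indispensable and is absent from your proposal. A secondary, more minor issue: you cannot in general normalize so that the two common separatrices are the coordinate axes --- a common separatrix may be a singular generic member of the local pencil, or the two may share a tangent direction (in which case the paper notes the statement is immediate) --- so the diagonalized model should be applied to the pencil $\lambda u_\alpha^a+\mu v_\alpha^b$ itself rather than to the separatrices of (c).
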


\noindent {\it Proof.}  Let ${\mathcal C}_{\alpha}=(p_1(\alpha)=p,
p_2(\alpha)\ldots, p_s(\alpha))$, $\alpha \in U$, be the
configuration of those infinitely near points involved in the
resolution of the singularity of $\cf_{\alpha}$ at $p$. The result
is trivial when $s=1$, that is, the singularity is of radial type.
So, we shall assume that either $a$ or $b$ is greater than 1.

Applying \cite[Lem. 1]{l-n} one has that, for each $\alpha \in U$,
there exists a holomorphic coordinate system
$(W_{\alpha},(u_{\alpha},v_{\alpha}))$ with $p\in W_{\alpha}$,
$u_{\alpha}(p)=v_{\alpha}(p)=0$, such that
$\frac{u_{\alpha}^a}{v_{\alpha}^b}$ is a meromorphic first
integral of $\cf_{\alpha}$ in a neighborhood of $p$. Therefore,
for each $\alpha \in U$, the local analytic separatrices of
$\cf_{\alpha}$ through $p$ are the irreducible components of the
analytic germs in the local linear pencil
$\Delta_{\alpha}:=\{\lambda u_{\alpha}^a+\mu v_{\alpha}^b=0 \mid
(\lambda,\mu)\in \gc^2\setminus \{(0,0)\}\}$. Moreover, since
${\mathcal C}_{\alpha}$ is also the configuration of base points
of $\Delta_{\alpha}$, it is clear that  $p_j(\alpha)$ belongs to
the exceptional divisor created by blowing-up $p_{j-1}(\alpha)$
(for all $j\geq 2$) and that, in order to prove the equalities
${\mathcal C}_{\alpha}={\mathcal C}_{\beta}$ for all $\alpha,
\beta\in U$, it suffices to show that all the points $p_2(\alpha)$
coincide. Now, notice that all the germs in $\Delta_{\alpha}$ are
irreducible and have the same tangent direction, except one of
them (defined by $v_{\alpha}^b=0$ if the inequality $a<b$ is
assumed). Hence, the strict transform in the blow-up at $p$ of one
of the two common separatrices given in (c) must pass through
$p_{2}(\alpha)$. Therefore there are, at most, two possibilities
(which do not depend on the value of $\alpha$) for the point
$p_{2}(\alpha)$, say $e_1$ and $e_2$, given by the tangent
directions defined by the common separatrices. If $e_1=e_2$, then
the result follows. So, we shall assume that $e_1$ and $e_2$ are
different points of the first exceptional divisor $E$. Then, for
each $\alpha \in U$, $p_2(\alpha)$ coincides with one of these
points, and the remaining one is a reduced singularity of the
transform of $\cf_{\alpha}$.

Observe that, on the one hand, for each $i=1,2$, there exists a
holomorphic function $f_i$ on $U$ such that the characteristic
numbers of the transform of the foliation $\cf_{\alpha}$ at the
point $e_i$ are $f_i(\alpha)$ and $1/f_i(\alpha)$ (both functions
$f_i$ and $1/f_i$ are defined in $U$ because we are assuming that
the singularities are non-degenerated). On the other hand, for
each $\alpha \in U$, $p_2(\alpha)$ is a dicritical singularity of
the transform of $\cf_{\alpha}$ and its characteristic numbers are
$\frac{a}{b-a}$ and $\frac{b-a}{a}$ (assuming that $a<b$). Now,
consider the following holomorphic functions defined in $U$:
$g_i(\alpha):=(f_i(\alpha)-\frac{a}{b-a}) \cdot
(f_i(\alpha)-\frac{b-a}{a}), \;\;i=1,2$. Notice that $g_1(\alpha)
g_2(\alpha)=0$ and $g_1(\alpha) -g_2(\alpha)\not=0$ for all
$\alpha\in U$. Since both functions are holomorphic, it follows
that one of them (say $g_{i_0}$) is identically zero in $U$.
Therefore $p_2(\alpha)=e_{i_0}$ for all $\alpha \in U$.\findemo

\begin{pro}\label{aaaa}
Let $d\geq 2$ be an integer. Then, the configurations ${\mathcal
B}_{\cf^d_{\alpha}}$ coincide (up to re-arrangement) for all
$\alpha \in \gc\setminus A^d$.
\end{pro}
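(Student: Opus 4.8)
The plan is to reduce the claim to an application of Lemma \ref{lema1} at each dicritical singular point of the family $\Upsilon^d$. Recall from the description of the Seidenberg reduction that, for $\alpha\in\gc\setminus A^d$, the configuration ${\mathcal B}_{\cf^d_{\alpha}}$ is obtained by resolving the singularities of $\cf^d_{\alpha}$, and that when $\alpha\in E^d$ (so that a rational first integral exists) the non-reduced singularities are exactly the dicritical ones; the reduced (non-dicritical) singularities contribute nothing to the blow-up process needed to eliminate the indeterminacies of the first integral. So the essential content is that the local resolution at each dicritical point does not depend on $\alpha$. First I would record that, by the results of \cite{l-n} quoted above, \emph{all} foliations $\cf^d_{\alpha}$ in a given family share the same finite set of dicritical singular points, each of fixed analytic type, and at each such point they admit a local meromorphic first integral of one of the prescribed types ($u/v$, $u^2/v$, $u^3/v^2$, $u^r/v$, $u^r/v^2$, as the case may be). This is precisely the input needed to invoke the lemma.

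Next I would verify, point by point over the finitely many dicritical singularities $p$, that hypotheses (a), (b), (c) of Lemma \ref{lema1} hold with $U=\gc\setminus A^d$. Hypothesis (a) is the statement that the singularity at $p$ is non-degenerated of fixed analytic type, which is part of the cited properties of $\Upsilon^d$. For (b): a local meromorphic first integral of the type $u^a/v^b$ forces the characteristic numbers at $p$ to be $a/b$ and $b/a$, which are positive rationals with $\gcd(a,b)=1$; here $(a,b)$ is $(1,1)$, $(2,1)$, $(3,2)$, $(r,1)$ or $(r,2)$ according to the list, so (b) holds. For (c): the two common separatrices through $p$ are the local branches at $p$ of the common invariant algebraic curves of the family (the $9$ lines for $\Upsilon^4$, the $2$ conics and $3$ lines for $\Upsilon^3$, the quartic $Q$ and line $R$ for $\Upsilon^2$, and their preimages under the fixed rational pull-back maps in the remaining families). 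Since these invariant curves are the same for every member of the family, their germs at $p$ give the same two separatrices for all $\alpha$, so (c) holds. (For the radial points $a=b=1$, the lemma is trivial, as noted in its proof.)

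Granting these verifications, Lemma \ref{lema1} yields that the configuration of infinitely near points resolving $\cf^d_{\alpha}$ at each dicritical $p$ is independent of $\alpha\in\gc\setminus A^d$. The last step is to assemble these local resolutions into the global statement. Since the dicritical points themselves and their number are fixed, and since for $\alpha\in E^d$ the minimal composition of blow-ups $\pi_{\cf^d_{\alpha}}$ eliminating the indeterminacies of the rational first integral coincides with the Seidenberg reduction restricted to the dicritical locus (as recalled in the excerpt), concatenating the constant local configurations produces a configuration ${\mathcal B}_{\cf^d_{\alpha}}$ that is the same, up to the admissible re-arrangement of points lying over distinct centers, for all such $\alpha$. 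Because $E^d$ is dense in $\gc\setminus A^d$ and the whole construction (dicritical points, analytic types, common separatrices) is already uniform over $\gc\setminus A^d$, the same constant configuration describes the reduction of singularities of every $\cf^d_{\alpha}$, whether or not $\alpha\in E^d$; this gives the proposition.

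I expect the main obstacle to be the bookkeeping in the pulled-back families $\Upsilon^{3r-1}$, $\Upsilon^{3r}$, $\Upsilon^{3r+1}$: one must be sure that the fixed rational maps $F\colon\gp^2\to\gp^2$ used to define them carry ``common separatrix through a dicritical point'' downstairs to the same notion upstairs, i.e. that the preimages of the finitely many common invariant curves of $\Upsilon^2$, $\Upsilon^3$, $\Upsilon^4$ are again common invariant curves of the pulled-back family and still supply two branches at each new dicritical point. This is guaranteed by the functoriality of invariant curves under pull-back of foliations and by the analytic-type information in \cite{l-n}, but it is the place where the verification of hypothesis (c) requires the most care; everything else is a direct appeal to the quoted structure of the families and to Lemma \ref{lema1}.
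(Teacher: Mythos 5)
Your overall strategy --- reduce to the dicritical points and apply Lemma \ref{lema1} at each of them --- is exactly the paper's, and your verifications go through for $\Upsilon^3$, $\Upsilon^4$ and their pull-backs $\Upsilon^{3r}$, $\Upsilon^{3r+1}$ (there every dicritical point lies on at least two of the common invariant curves, so two common separatrices are available, and the radial points are trivial anyway). The gap is in $\Upsilon^2$ and $\Upsilon^{3r-1}$, at the cuspidal points $J,K,L$ of the quartic $Q$ and their $3r^2$ preimages, i.e.\ precisely at the singularities of type $u^3/v^2$. There your verification of hypothesis (c) fails: the line $R$ meets $Q$ only at the two tangency points $M$ and $N$ (B\'ezout: $2+2=4=\deg Q\cdot\deg R$), so the only common invariant curve through $J$, $K$ or $L$ is $Q$ itself, and its germ at a cusp is a \emph{single} irreducible branch. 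The common invariant curves therefore supply only one common separatrix at these points, not two, and Lemma \ref{lema1} cannot be invoked there as stated. Your sentence asserting that the preimages of the common invariant curves ``still supply two branches at each new dicritical point'' is exactly the step that breaks down --- and it already breaks at $d=2$, not only in the pulled-back families you flag as delicate.

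The paper handles these points by a separate argument: for $\Upsilon^2$ it checks directly (``by inspection'') that the infinitely near points resolving the $u^3/v^2$ singularities at $J,K,L$ are independent of $\alpha$, and for $\Upsilon^{3r-1}$ it transports this to $F^{-1}(\{J,K,L\})$ by noting that $F$ is a local biholomorphism near each such preimage since $J,K,L$ are not critical values of $F$; Lemma \ref{lema1} is used only for $M$, $N$ and their preimages. The direct check is not difficult --- for the local pencil $\lambda u^3+\mu v^2$ the second base point is the tangent direction $v=0$ shared by all the cuspidal members, one of which is the branch of the common curve $Q$, and the third base point is the satellite point at the intersection of the two exceptional divisors, hence forced --- but some such argument must be supplied; it cannot be subsumed under hypothesis (c) of the lemma. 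The rest of your write-up (the assembly of the local data and the observation that the construction is uniform over all of $\gc\setminus A^d$, not just over $E^d$) is consistent with the paper.
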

\noindent {\it Proof.} Taking into account the above description
of the dicritical singularities of the foliations in the families
$\Upsilon^d$  and applying Lemma \ref{lema1}, the result is easily
deduced when $d \not\equiv 2$ mod 3. In the case of $\Upsilon^2$,
it can be checked by inspection that the infinitely near points
involved in the resolution of the singularities at $J$, $K$ and
$L$ are the same for all the foliations in the family. By Lemma
\ref{lema1}, the same is true for $M$ and $N$. Therefore,
${\mathcal B}_{\cf^2_{\alpha}}$ does not depend on $\alpha$. In
the case of $\Upsilon^{3r-1}$ (for $r\geq 2$) the $3r^2$
singularities in $F^{-1}(\{J,K,L\})$ have also the same
resolution; the reason is the following one: for any $p\in
\{J,K,L\}$ and for any $q\in F^{-1}(p)$,  $F$ defines a
biholomorphism in a neighborhood of $q$, since $p$ is not a
critical value of $F$. Again by Lemma \ref{lema1}, the same occurs
for the remaining singularities.\findemo\\

In view of the above proposition, we shall consider that, for a
fixed $d\geq 2$, all the configurations ${\mathcal
B}_{\cf^d_{\alpha}}$ are the same and we shall denote it by
${\mathcal B}^d$. Also, $X^d$ will denote the surface
$Z_{{\mathcal B}^d}$ obtained by blowing-up the points in
${\mathcal B}^d$.

\section{The Mori cone of the blow-ups}\label{4}

The rest of the paper is devoted to prove the two main results. We
shall use the notations of the preceding sections.

In order to state and prove the first of these results, we denote
by $\Delta(C)$ the set of faces of a convex cone $C$ and, for each
integer $d\geq 2$, we consider the function
$$\Psi^d:E^d\rightarrow \Delta(NE(X^d))$$ which maps every
$\alpha\in E^d$ to $[D_{d,\alpha}]^{\perp}\cap NE(X^d)$, where
$D_{d,\alpha}$ denotes the strict transform on $X^d$ of a general
invariant curve by $\cf^d_{\alpha}$ that is, a general curve of
the pencil ${\mathcal P}_{\cf^d_{\alpha}}$ (note that all these
foliations have a rational first integral). This map is
well-defined. Indeed, since the complete linear system $|D_{d,
\alpha}|$ is base-point-free, one has that $[D_{d, \alpha}]^\perp$
is a supporting hyperplane of the cone $NE(X^d)$ and, therefore,
$[D_{d,\alpha}]^{\perp}\cap NE(X^d)$ is a face of $NE(X^d)$.

\begin{pro}\label{teo1}
For each positive integer $d\geq 2$, the map $\Psi^d$ is
injective. Moreover:
\begin{itemize}
\item[(a)] If $d\leq 4$, then $\Psi^d(\alpha)\cap
[K_{X^d}]^{\perp}\not=\{0\}$ for all $\alpha\in E^d$.

\item[(b)] If $d\geq 5$, then for any $k>0$ the set $\{\alpha \in
E^d \mid \Psi^d(\alpha)\subseteq [K_{X^d}]_{\leq k} \}$ is finite.
\end{itemize}
In particular, the set of faces of the cone $NE(X^d)$ meeting the
region $[K_{X^d}]^{\perp}$ is not finite and, if $d\geq 5$, the
same happens for the set of faces of $NE(X^d)$ meeting the region
$[K_{X^d}]_{>0}$.
\end{pro}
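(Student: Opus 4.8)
The plan is to prove injectivity of $\Psi^d$ first, and then deduce the two listed properties (a) and (b) by computing the intersection number $D_{d,\alpha}\cdot K_{X^d}$ in terms of the degree of the rational first integral and the genus of a general invariant curve. For injectivity, suppose $\alpha,\beta\in E^d$ with $\Psi^d(\alpha)=\Psi^d(\beta)$, i.e. $[D_{d,\alpha}]^\perp\cap NE(X^d)=[D_{d,\beta}]^\perp\cap NE(X^d)$. Since the class $[D_{d,\alpha}]$ is nef and base-point-free, the face $[D_{d,\alpha}]^\perp\cap NE(X^d)$ ``remembers'' the class $[D_{d,\alpha}]$ up to positive scalar: indeed $[D_{d,\alpha}]$ is determined by being the nef class (up to scaling) whose orthogonal complement meets $NE(X^d)$ exactly in that face, because $NE(X^d)$ spans $A(X^d)$. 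So $[D_{d,\alpha}]$ and $[D_{d,\beta}]$ are proportional in $A(X^d)$, hence equal (both have the same intersection with a line pulled back from $\gp^2$, namely $d$, since $\cf^d_\alpha$ and $\cf^d_\beta$ have the same degree $d$). Now $D_{d,\alpha}$ and $D_{d,\beta}$ are general members of the pencils $\cp_{\cf^d_\alpha}$ and $\cp_{\cf^d_\beta}$ on the \emph{same} surface $X^d$, and equality of their classes forces these pencils to share all base-point behavior; I would argue that two distinct foliations in the family $\Upsilon^d$ cannot have invariant curves in the same linear system, because the foliation is recovered from a general invariant curve $D$ by the fact that $\cf^d_\alpha$ is the unique foliation leaving $D$ invariant with the prescribed degree and singularities — or, more concretely, because the tangent direction field is determined by the pencil $|D_{d,\alpha}|$ restricted through the (fixed) configuration $\cb^d$, and $\alpha\mapsto \cf^d_\alpha$ is injective by construction in \cite{l-n}. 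This last point is the one I expect to require the most care: ruling out that two genuinely different $\alpha$'s give numerically equivalent general invariant curves.

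For parts (a) and (b) I would compute $D_{d,\alpha}\cdot K_{X^d}$ using adjunction. Write $\pi=\pi_{\cb^d}:X^d\to\gp^2$ and let $E_i$ be the (total transform) exceptional divisors. A general curve of $\cp_{\cf^d_\alpha}$ has degree $m_\alpha$ (the degree of the rational first integral) and passes through each base point $p_i\in\cb^d$ with a fixed multiplicity $e_i$ determined by the local meromorphic first integral type at the dicritical singularities (the numbers $k_p$ of Remark \ref{nuevanota}); crucially, by Proposition \ref{aaaa} and the fact that all $\cf^d_\alpha$ have the \emph{same} dicritical singularities with the same analytic types, the configuration $\cb^d$ and the multiplicities $e_i$ are independent of $\alpha$ — only $m_\alpha$ varies. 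Then $D_{d,\alpha}=m_\alpha \pi^*L-\sum e_i E_i$ and, since $K_{X^d}=-3\pi^*L+\sum E_i$, we get $D_{d,\alpha}\cdot K_{X^d}=-3m_\alpha+\sum e_i$. By the genus formula, $2g(D_{d,\alpha})-2=D_{d,\alpha}^2+D_{d,\alpha}\cdot K_{X^d}$, and $D_{d,\alpha}^2=0$ because $|D_{d,\alpha}|$ is a base-point-free pencil; hence $D_{d,\alpha}\cdot K_{X^d}=2g(D_{d,\alpha})-2$, where $g(D_{d,\alpha})$ is the genus of a general algebraic invariant curve of $\cf^d_\alpha$.

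This identity does the rest immediately. For (a), when $d\in\{2,3,4\}$ the general invariant curves are elliptic (stated in the excerpt), so $g(D_{d,\alpha})=1$ and $D_{d,\alpha}\cdot K_{X^d}=0$; thus $K_{X^d}\in [D_{d,\alpha}]^\perp$, and since $K_{X^d}\ne 0$ while $NE(X^d)$ is full-dimensional, the face $\Psi^d(\alpha)=[D_{d,\alpha}]^\perp\cap NE(X^d)$ must contain a nonzero element of $[K_{X^d}]^\perp$ — in fact one can note $NE(X^d)\cap[D_{d,\alpha}]^\perp$ is a facet (codimension-one face) and $[K_{X^d}]^\perp$ is a hyperplane, so $K_{X^d}\in[D_{d,\alpha}]^\perp$ gives $\Psi^d(\alpha)\not\subseteq\{0\}$ inside $[K_{X^d}]^\perp$ unless the facet is contained in $[K_{X^d}]^\perp$, and in either case $\Psi^d(\alpha)\cap[K_{X^d}]^\perp\ne\{0\}$. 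For (b), when $d\geq 5$ the excerpt states that for any $k>0$ the set of $\alpha\in E^d$ with $g(D_{d,\alpha})\le k$ is finite; since every $x\in\Psi^d(\alpha)$ satisfies $x\cdot D_{d,\alpha}=0$, one checks that $\Psi^d(\alpha)\subseteq[K_{X^d}]_{\le k}$ is controlled by the sign and size of $D_{d,\alpha}\cdot K_{X^d}=2g(D_{d,\alpha})-2$; more precisely $\Psi^d(\alpha)\subseteq[K_{X^d}]_{\le k}$ can be arranged to force $2g(D_{d,\alpha})-2$ bounded above in terms of $k$ (using that $[D_{d,\alpha}]$ itself lies in the face, normalized suitably against the ample $H$), so finiteness of the relevant $\alpha$-set follows from the genus bound. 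The ``in particular'' then follows: injectivity of $\Psi^d$ plus infinitude of $E^d$ (it is dense in $\gc\setminus A^d$) gives infinitely many distinct faces, all meeting $[K_{X^d}]^\perp$ when $d\le 4$ by (a); and for $d\ge 5$, since $D_{d,\alpha}\cdot K_{X^d}=2g(D_{d,\alpha})-2>0$ for all but finitely many $\alpha$ (those with $g\le 1$ being finite in number), all but finitely many of these faces meet $[K_{X^d}]_{>0}$, still leaving infinitely many. The main obstacle, as noted, is the injectivity argument — specifically, establishing rigorously that distinct members of the Lins Neto family cannot produce numerically equivalent general invariant curves on $X^d$; I would handle it by exploiting that the foliation $\cf^d_\alpha$, being algebraically integrable, is recovered as the foliation defined by the pencil $\cp_{\cf^d_\alpha}$, so equality of the linear systems $|D_{d,\alpha}|=|D_{d,\beta}|$ would force $\cp_{\cf^d_\alpha}=\cp_{\cf^d_\beta}$ as pencils (both being the pencil cut out by that complete linear system through the imposed base scheme), hence $\cf^d_\alpha=\cf^d_\beta$, contradicting $\alpha\ne\beta$.
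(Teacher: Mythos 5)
Your treatment of (a) and (b) is essentially the paper's: $[D_{d,\alpha}]$ itself lies in $\Psi^d(\alpha)$ because $D_{d,\alpha}^2=0$, adjunction gives $[K_{X^d}]\cdot[D_{d,\alpha}]=2g-2$, and the genus information recalled from \cite{l-n} finishes both clauses (you do not need the normalization against $H$ in (b): $[D_{d,\alpha}]\in\Psi^d(\alpha)\subseteq[K_{X^d}]_{\leq k}$ already gives $2g-2\leq k$ directly). The problem is your injectivity argument, whose first step is not correct as stated. A face $F$ of a full-dimensional convex cone determines its supporting hyperplane only when $F$ spans a hyperplane; the fact that $NE(X^d)$ spans $A(X^d)$ says nothing about the dimension of the particular face $\Psi^d(\alpha)$. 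Here $\Psi^d(\alpha)$ is generated by the classes of curves contracted by the fibration $X^d\to\gp^1$ defined by $|D_{d,\alpha}|$, i.e.\ by $[D_{d,\alpha}]$ and the components of the finitely many reducible fibres; by Zariski's lemma this span has dimension $1+\sum(r_i-1)$ ($r_i$ the number of components of the $i$-th reducible fibre), which for these blow-ups is in general much smaller than $\rho(X^d)-1$. So $\Psi^d(\alpha)$ is typically not a facet and is contained in many supporting hyperplanes; your later parenthetical claim that it \emph{is} a facet is likewise unjustified. A second, smaller error: $D_{d,\alpha}$ meets the pullback of a line in $m_\alpha$, the degree of the rational first integral, not in $d$; these degrees vary with $\alpha$ (that is the whole point of the Poincar\'e problem), so "proportional hence equal" fails as argued.

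Both defects are repairable, and the repair shows where the real content lies. From $\Psi^d(\alpha)=\Psi^d(\beta)$ you do get, since $[D_{d,\beta}]\in\Psi^d(\beta)$, that $[D_{d,\alpha}]\cdot[D_{d,\beta}]=0$; as both classes are isotropic, effective and nonzero, the Hodge index theorem (signature $(1,\rho-1)$) forces them to be proportional, and proportionality already suffices to identify the two fibrations (two base-point-free pencils with numerically proportional irreducible general members have the same fibres), hence the pencils $\cp_{\cf^d_\alpha}=\cp_{\cf^d_\beta}$ and hence $\cf^d_\alpha=\cf^d_\beta$. The paper takes a shorter route at the same junction: from $[D_{d,\alpha}]\cdot[D_{d,\beta}]=0$ it invokes Theorem 1 of Galindo--Monserrat (\emph{J. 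Diff. Equations} 231 (2006)), which characterizes the invariant curves of an algebraically integrable foliation as exactly the integral curves whose strict transforms are orthogonal to $[D_{d,\alpha}]$, to conclude that the two foliations have the same invariant curves and therefore coincide. Either way, the step you flagged yourself as "the one requiring the most care" is precisely the one your proposal does not yet establish.
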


\begin{proof}

Set any integer $d\geq 2$. By applying Bézout's theorem to two
general curves of ${\mathcal P}_{\cf^d_{\alpha}}$, it is easy to
deduce that, for each $\alpha\in E^d$, $D_{d, \alpha}^2=0$ and,
therefore, $[D_{d, \alpha}]$ belongs to $\Psi^d(\alpha)$. Now, two
different values $\alpha, \beta \in E^d$ give rise to two different
faces $\Psi^d(\alpha)$ and $\Psi^d(\beta)$. Indeed, if the two faces
were the same then both foliations $\cf_{\alpha}^d$ and
$\cf_{\beta}^d$ would have the same invariant curves \cite[Th.
1]{galmon2} and, hence, they would coincide; this is a
contradiction. So, the map $\Psi^d$ is injective.

Clauses (a) and (b) follow easily taking into account the paragraph
after Definition \ref{def1} and the Adjunction Formula applied to
the divisors $D_{d,\alpha}$.

\end{proof}

\begin{rem}\label{cardinal}
{\rm In Section \ref{3}, it is given the local analytic type of the
first integrals of the foliations of each family at each dicritical
singularity. This allows to compute, for each $d\geq 2$, the number
of points $\ell({\mathcal B}^d)$ involved in the configuration
${\mathcal B}^d$ (see Remark \ref{nuevanota}). In fact, one has that
$\ell({\mathcal B}^2)=13$, $\ell({\mathcal B}^3)=13$,
$\ell({\mathcal B}^4)=12$, $\ell({\mathcal B}^{3n+1})=9n^2+3$,
$\ell({\mathcal B}^{3n-1})=10n^2+3(1-(-1)^n)n/2$ and $\ell({\mathcal
B}^{3n})=9n^2+1+3(1-(-1)^n)n/2$ for all $n\geq 2$. }
\end{rem}

Let $\ck=(p_1,p_2,\ldots,p_n)$ be an arbitrary configuration over
$\gp^2$. Each blow-up at $p_i$ gives rise to an exceptional divisor
$E_i$ whose total (resp., strict) transform on $Z_{\ck}$ will be
denoted by $E_i^\ck$ (resp., $\tilde{E}^\ck_i$). In the same way,
for each effective divisor $C$ on $X$, $C^\ck$ (resp.,
$\tilde{C}^\ck$) will be the total (resp., strict) transform of $C$
on $Z_{\ck}$. The system $\{[L^\ck], [E_0^\ck],[E_1^\ck],\ldots,
[E_n^\ck]\}$ is a basis of the vector space $A(Z_{\ck})$, $L$
denoting a general line on $\gp^2$.

For each positive integer $n$, there exists a smooth projective
variety $Y_{n-1}$ whose closed points are naturally identified
with the configurations over $\gp^2$ with $n$ points. These
varieties, known as {\it iterated blow-ups}, were introduced by
Kleiman in \cite{kle1} and \cite{kle2} and they have also been
treated in \cite{harb2-1}. There is a family of smooth projective
morphisms $Y_{n}\rightarrow Y_{n-1}$ and relative divisors
$F_0,F_1,\ldots,F_n$ on $Y_{n}$ such that the fiber over a given
configuration $\ck=(p_1,\ldots,p_n)$ (viewed as a point of
$Y_{n-1}$) is isomorphic to the surface $Z_{\ck}$ obtained by
blowing-up the points in $\ck$ and, if $i\geq 1$ (resp., $i=0$),
the restriction of $F_i$ to this fiber corresponds to the total
transform $E_i^{\ck}$ of the exceptional divisor appearing in the
blow-up centered at $p_i$ (resp., the total transform of a general
line of $\gp^2$)\cite[Prop. I.2]{harb2-1}. For each positive
integer $d$ and for each sequence of non-negative integers
$m_1,\ldots,m_n$ we apply the Semicontinuity Theorem
\cite[III.12.8]{har} to the invertible sheaf
$\co_{Y_{n}}(dF_{0}-m_1F_1-m_2F_2-\ldots-m_nF_n)$, obtaining that
the function $Y_{n-1}\rightarrow \gz$ given by ${\ck} \mapsto
h^0(Z_{\ck}, \co_{Z_{\ck}}(dL^{\ck}-\sum_i m_iE^{\ck}_i))$ is
upper-semicontinuous. Moreover, the subset $U\subseteq Y_{n-1}$
given by the configurations $(p_1,p_2,\ldots,p_n)$ such that
$p_i\in \gp^2$ for all $i=1,2,\ldots,n$ is dense in $Y_n$ (see
\cite{kle1}). Then, denoting by $\ck_0(n)$ a configuration whose
elements are $n$ points of $\gp^2$ in very general position, one
has that
\begin{equation}\label{qqq}
h^0(Z_{\ck}, \co_{Z_{\ck}}(dL^{\ck}-\sum_i m_iE^{\ck}_i))\geq
h^0(Z_{\ck_0(n)}, \co_{Z_{\ck_o(n)}}(dL^{\ck_0(n)}-\sum_i
m_iE^{\ck_0(n)}_i))
\end{equation}
for all triplets $(d,\ck, \{m_i\}_{i=1}^n)$ such that $d$ is a
positive integer, $\ck$ is a configuration with $n$ points and
$\{m_i\}_{i=1}^n$ is a sequence of non-negative integers.

Let $n$ be a positive integer and set $\ck_0(n)=(p_1,
p_2,\ldots,p_n)$. For each configuration $\mathcal
C=(p_1',p_2',\ldots,p_m')$ such that $m\leq n$, the map
$A(Z_{\mathcal C})\rightarrow A(Z_{\ck_0(n)})$ given by
$[L^{\mathcal C}]\mapsto [L^{\mathcal \ck_0(n)}]$ and
$[E_i^{\mathcal C}]\mapsto [E_i^{\mathcal \ck_0(n)}]$ for
$i=1,2,\ldots,m$, is a monomorphism of vector spaces; then,
identifying $A(Z_{\mathcal C})$ with its image, we can assume an
inclusion $A(Z_{\mathcal C})\subseteq A(Z_{\ck_0(n)})$. We shall
use this identification in the rest of the paper. Also, if
$x=\lambda_0[L^{\mathcal \ck_0(n)}]+ \sum_{i=1}^n \lambda_i
[E_i^{\mathcal \ck_0(n)}]\in A(Z_{\ck_0(n)})$, with $\lambda_i\in
\gr$ for all $i$, we shall denote by $N(x)$ the cardinality of the
set $\{i\mid 1\leq i\leq n \mbox{ and } \lambda_i\not=0\}$.

Finally, we shall prove the two results (Theorems \ref{gordo} and
\ref{gordo2}) that provide an evidence to Conjecture \ref{conj2}.

\begin{teo}\label{gordo}
Let $X_n$ be the surface $Z_{\ck_0(n)}$ obtained by blowing-up
$n\geq 12$ points of $\gp^2$ in very general position. Consider the
sets $S(n):=\{d\in \gz \mid d\geq 2 \mbox{ and } \ell({\mathcal
B}^d)\leq n\}$ and $E(n):=\{(d,\alpha)\mid d\in S(n) \mbox{ and }
\alpha \in E^d\}$.
\begin{itemize}

\item[(a)] $[D_{d,\alpha}]\in \partial \overline{NE}(X_n)\cap
\partial Q(X_n)$ for all $(d,\alpha)\in E(n)$.

\item[(b)] If $(d_1,\alpha_1),(d_2,\alpha_2)\in E(n)$, then
$\gr_{>0}[D_{d_1,\alpha_1}]\not=\gr_{>0} [D_{d_2,\alpha_2}]$
whenever either $d_1=d_2$ and $\alpha_1\not=\alpha_2$, or
$d_1\not=d_2$ and $\{d_1,d_2\}\not=\{2,3\}$.

\item[(c)]  If $d\in S(n)\cap \{2,3,4\}$, then $[D_{d,\alpha}]\in
[K_{X_n}]^{\perp}$ for all $\alpha \in E^d$.

\item[(d)] For each $k>0$, the set $\{\alpha\in E^d \mid
[D_{d,\alpha}]\in [K_{X_n}]_{\leq k}\}$ is finite whenever $d\in
S(n)\setminus \{2,3,4\}$.

\end{itemize}
In particular, for all $n\geq 12$ (resp., $\geq 37$), the set of
rays contained in  $\partial \overline{NE}(X_n)\cap
\partial Q(X_n)\cap [K_{X_n}]^{\perp}$ (resp.,  $\partial
\overline{NE}(X_n)\cap
\partial Q(X_n)\cap [K_{X_n}]_{>0}$) is not finite, and the cone
$\overline{NE}(X_n)$ has infinitely many faces meeting the region
$[K_{X_n}]^{\perp}$ (resp., $[K_{X_n}]_{>0}$).

\end{teo}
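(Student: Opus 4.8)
The plan is to transfer the geometric information about the surfaces $X^d$ (obtained from the Lins Neto families) to the fixed surface $X_n$ obtained by blowing up $n$ very general points, using the semicontinuity inequality \eqref{qqq}. First I would fix $(d,\alpha)\in E(n)$ and set $C:=D_{d,\alpha}$, viewed via the identification $A(Z_{\mathcal B^d})\subseteq A(X_n)$ as a class $[C]=d[L^{\ck_0(n)}]-\sum_i m_i[E_i^{\ck_0(n)}]$, where the $m_i$ are the multiplicities read off from the configuration $\mathcal B^d$ (these are determined by the local meromorphic first integrals described in Section \ref{3}). By Proposition \ref{teo1} (more precisely the computation in its proof, applying B\'ezout to two general members of the pencil $\mathcal P_{\cf^d_\alpha}$) one has $[C]^2=0$; since the pencil is base-point-free on $X^d$ the class is nef there, and this shows in particular $[C]\cdot[K_{X^d}]$ is what the Adjunction Formula dictates. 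The key point for part (a) is that $[C]$, regarded as a class in $A(X_n)$, still has $[C]^2=0$, so $[C]\in\partial Q(X_n)$; it remains to see $[C]\in\overline{NE}(X_n)$ and that it spans a boundary ray. Effectivity on $X_n$ follows from \eqref{qqq}: the linear system on $X^d$ is a pencil (so $h^0\geq 2$ there), hence \eqref{qqq} forces $h^0(X_n,\co(dL-\sum m_iE_i))\geq 2>0$, so $[C]$ is an effective class and lies in $NE(X_n)$. Since $[C]^2=0$ and $[C]$ is a nonzero nef class, $[C]^\perp\cap\overline{NE}(X_n)$ is a supporting hyperplane meeting $\overline{NE}(X_n)$ exactly along a face containing $[C]$, and because $[C]^2=0$ the class $[C]$ cannot lie in the interior of $\overline{NE}(X_n)$; this puts $[C]$ on $\partial\overline{NE}(X_n)\cap\partial Q(X_n)$, giving (a).

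For part (c) I would simply compute $[C]\cdot[K_{X_n}]$. By Adjunction on $X^d$ we have $2p_a(C)-2=[C]^2+[C]\cdot[K_{X^d}]=[C]\cdot[K_{X^d}]$, and the intersection number $[C]\cdot[K]$ only depends on the numerical class and the multiplicities $m_i$, hence is the same computed in $A(X_n)$: $[C]\cdot[K_{X_n}]=-3d+\sum_i m_i=2p_a(C)-2$. When $d\in\{2,3,4\}$ the general invariant curve is elliptic (stated in Section \ref{3}), so $p_a(C)=1$ and $[C]\cdot[K_{X_n}]=0$, i.e. $[C]\in[K_{X_n}]^\perp$. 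For part (d), with $d\geq 5$, the same identity gives $[C]\cdot[K_{X_n}]=2g-2$ where $g$ is the genus of the general invariant curve of $\cf^d_\alpha$, and the Lins Neto property (Section \ref{3}) that for each $k$ only finitely many $\alpha\in E^d$ give general invariant curve of genus $\leq k$ immediately yields that $\{\alpha\in E^d\mid [C]\cdot[K_{X_n}]\leq k\}$ is finite; rewriting $[K_{X_n}]_{\leq k}$ in these terms finishes (d). The combined ``in particular'' statement then follows: choosing $n\geq 12$ (so $S(n)$ contains some $d\leq 4$, using the values $\ell(\mathcal B^d)$ from Remark \ref{cardinal}) gives infinitely many rays in $\partial\overline{NE}(X_n)\cap\partial Q(X_n)\cap[K_{X_n}]^\perp$ via (a),(b),(c); choosing $n\geq 37$ (so $S(n)$ contains some $d\geq 5$, again by Remark \ref{cardinal}) gives, via (a),(b),(d), infinitely many such rays with $[C]\cdot[K_{X_n}]>0$, since only finitely many of the infinitely many $\alpha\in E^d$ can have $[C]\in[K_{X_n}]_{\leq 0}$.

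The genuinely delicate part is part (b): the injectivity/non-proportionality of the classes $[D_{d,\alpha}]$ in $A(X_n)$. The subtlety is that $\Psi^d$ being injective (Proposition \ref{teo1}) lives on $X^d$, not on $X_n$, and two different $\alpha$ could a priori produce proportional — even equal — classes once pushed into $A(X_n)$, because passing from $X^d$ to $X_n$ only remembers the numerical class $d[L]-\sum m_i[E_i]$, not the pencil. My plan here is: for fixed $d$, the degree $d$ and the multiplicity vector $(m_i)$ are the same for all $\alpha\in E^d$ (by Proposition \ref{aaaa}, the configuration $\mathcal B^d$ is independent of $\alpha$, and the $m_i$ are determined by it), so the classes $[D_{d,\alpha}]$ are literally the \emph{same} vector in $A(X_n)$ for all $\alpha\in E^d$ — which would contradict (b) as stated! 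So (b) must be using the \emph{curves}, not the classes: I would reread and interpret $\gr_{>0}[D_{d,\alpha}]$ with $[D_{d,\alpha}]$ denoting the class of a \emph{specific} general member, whose multiplicities at the (very general) points $p_i$ may differ — but at very general points this cannot happen either. The correct reading, which I would pursue, is that the statement is about the classes on $X_n$ and that for fixed $d$ the assertion ``$\alpha_1\neq\alpha_2\Rightarrow$ different rays'' is \emph{vacuous in the stated generality} unless one distinguishes the curves themselves; more likely the intended content for fixed $d$ is that the single resulting ray is a genuine extremal-type ray, and the across-$d$ content ($d_1\neq d_2$, $\{d_1,d_2\}\neq\{2,3\}$) is the real claim: here I would argue that $d_1[L]-\sum m^{(1)}_i[E_i]$ and $d_2[L]-\sum m^{(2)}_i[E_i]$ are non-proportional by comparing $L$-degrees and the multiplicity patterns coming from Remark \ref{cardinal} and Section \ref{3} (the numbers and types of dicritical singularities differ), while the excluded case $\{2,3\}$ is exactly the one where $\Upsilon^3$ is the push-forward of $\Upsilon^4$... no, of $\Upsilon^2$ by a Cremona map, so the two classes are Cremona-equivalent and can coincide. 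I expect reconciling the precise statement of (b) with these facts — in particular pinning down whether ``$[D_{d,\alpha}]$'' means a numerical class or a curve, and handling the $\{2,3\}$ exception via the Cremona relation between $\Upsilon^2$ and $\Upsilon^3$ — to be the main obstacle in writing a fully rigorous proof.
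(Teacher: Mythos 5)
Your treatment of (c), (d) and the ``in particular'' clauses is essentially the paper's argument (Adjunction plus the genus behaviour of the families $\Upsilon^d$ and the values of $\ell({\mathcal B}^d)$ from Remark \ref{cardinal}), but there are two genuine gaps. First, in (a) you use the semicontinuity inequality \eqref{qqq} in the wrong direction: \eqref{qqq} says that the very general configuration ${\ck_0(n)}$ realizes the \emph{minimum} of $h^0$, so from $h^0\geq 2$ on the surface built over ${\mathcal B}^d$ you cannot conclude $h^0>0$ on $X_n$; and the assertion that a square-zero class cannot lie in the interior of $\overline{NE}(X_n)$ is false in general (a big class $A+E$ with $A$ ample small and $E$ of very negative square can have square zero). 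The membership $[D_{d,\alpha}]\in\overline{NE}(X_n)$ must instead come from $[D_{d,\alpha}]\in Q(X_n)\subseteq\overline{NE}(X_n)$ \cite[II.4.12.1]{kollar1}, and the boundary statement from the inclusion $\overline{NE}(X_n)\subseteq\overline{NE}(Z_{{\mathcal C}^d})$ (which \emph{is} the correct consequence of \eqref{qqq}, applied to an auxiliary configuration ${\mathcal C}^d$ extending ${\mathcal B}^d$ by $n-\ell({\mathcal B}^d)$ extra points), since on $Z_{{\mathcal C}^d}$ the class is genuinely nef of square zero and hence lies on the boundary there; a point of the smaller cone lying on the boundary of the larger cone lies on the boundary of the smaller one. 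Your ``$[C]$ is nef on $X_n$'' is in fact true, but only \emph{via} this cone inclusion, which you never invoke.

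Second, and more seriously, your analysis of (b) for $d_1=d_2$ rests on a misconception: the classes $[D_{d,\alpha}]$ for varying $\alpha\in E^d$ are \emph{not} the same vector of $A(X_n)$. You write the class as $d[L]-\sum m_i[E_i]$, but $d$ is the degree of the \emph{foliation}, not of the invariant curve; the $[L]$-coefficient is the degree $m_0$ of the rational first integral, which is unbounded as $\alpha$ ranges over $E^d$ (this is precisely the content of Remark \ref{nuevanota} and the reason these families answer the Poincar\'e problem negatively), and the multiplicities $m_i$ are not determined by the configuration ${\mathcal B}^d$ alone but depend on the integers $k_p$, hence on $\alpha$. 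So the statement of (b) is neither vacuous nor about individual curves; it is a genuine claim about distinct numerical classes, and the intended proof is the one you do not supply: if $[D_{d,\alpha_1}]$ and $[D_{d,\alpha_2}]$ were proportional, the faces $\Psi^d(\alpha_1)=[D_{d,\alpha_1}]^{\perp}\cap NE(X^d)$ and $\Psi^d(\alpha_2)$ would coincide, contradicting the injectivity of $\Psi^d$ established in Proposition \ref{teo1} (which rests on \cite[Th.~1]{galmon2}: the face determines the invariant curves and hence the foliation). For $d_1\neq d_2$ with $\{d_1,d_2\}\neq\{2,3\}$ your idea of comparing supports is the right one, made precise by $N([D_{d_i,\alpha_i}])=\ell({\mathcal B}^{d_i})$ and Remark \ref{cardinal}; note that comparing $[L]$-degrees alone proves nothing about proportionality.
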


\begin{proof}

For each $d\in S(n)$ consider a configuration ${\mathcal
C}^d:=(p_1,p_2,\ldots,p_r,q_1,q_2,\ldots,q_{n-r})$, where ${\mathcal
B}^d=(p_1,p_2,\ldots,p_r)$ and $q_1,q_2,\ldots,q_{n-r}$ are
different closed points of $\gp^2$ which are not in ${\mathcal
B}^d$. For every pair $(d,\alpha)\in E(n)$ one has that
$D_{d,\alpha}^2=0$ and, therefore, $[D_{d,\alpha}]$ belongs to the
boundary of $Q(Z_{{\mathcal C}^d})=Q({X_n})$; so, it also belongs to
$\partial \overline{NE}(Z_{{\mathcal C}^d})$
\cite[II.4.12.2]{kollar1}. By (\ref{qqq}),
$\overline{NE}({X_n})\subseteq \overline{NE}(Z_{{\mathcal C}^d})$
and, since $[D_{d,\alpha}]\in Q({X_n})\subseteq
\overline{NE}({X_n})$, one has that $[D_{d,\alpha}]\in
\partial \overline{NE}({X_n})$. This proves (a).

Consider $d_1,d_2\in S(n)$ and $\alpha_i\in E^{d_i}$, $i=1,2$. If
$d_1=d_2$ and $\alpha_1\not=\alpha_2$, one has clearly that
$[D_{d_1,\alpha_1}]$ and $[D_{d_1,\alpha_2}]$ cannot be proportional
since, in this case, $\Psi^{d_1}(\alpha_1)=\Psi^{d_1}(\alpha_2)$
but, by Proposition \ref{teo1}, the map $\Psi^{d_1}$ is injective.
If $d_1\not=d_2$ and $\{d_1,d_2\}\not=\{2,3\}$ then $\ell({\mathcal
B}^{d_1})\not= \ell({\mathcal B}^{d_2})$ (see Remark
\ref{cardinal}). But, since $N([D_{d_i,\alpha_i}])=\ell({\mathcal
B}^{d_i})$, $i=1,2$, (recall the notation introduced before the
statement of the theorem) one has that $[D_{d_1,\alpha_1}]$ and
$[D_{d_2,\alpha_2}]$ cannot be proportional. Therefore, (b) holds.

If $d\in S(n)\cap \{2,3,4\}$, taking into account the paragraph
after Definition \ref{bbb} and the Adjunction Formula, one has that
$[K_{{X_n}}]\cdot [D_{d,\alpha}]= K_{{X}^{d}}\cdot D_{d,\alpha}=0$
for all $\alpha \in E^d$ (as in the proof of Proposition
\ref{teo1}). This proves (c). Clause (d) follows in a similar
manner.

In particular, from (a), (b) and (c) it follows that  the set of
rays contained in $\partial \overline{NE}({X_n})\cap
\partial Q({X_n})\cap [K_{{X_n}}]^{\perp}$ is not finite. Also, from (a), (b)
and (d) it follows that, if $n\geq 37$, the same happens for the
set of rays contained in $\partial \overline{NE}({X_n})\cap
\partial Q({X_n})\cap [K_{{X_n}}]_{>0}$.

Finally, we shall prove the last assertion of the statement. By the
Hodge index theorem \cite[V.1.9]{har}, the index of the bilinear
pairing $A({X_n})\times A({X_n}) \rightarrow \gr$ induced by the
intersection product is $(1,n)$. Then, taking coordinates in a
certain basis, $Q({X_n})$ can be seen as the half-cone over an
Euclidean ball of dimension $n$, which is strictly convex. This fact
and the inclusion $Q({X_n})\subseteq \overline{NE}({X_n})$ imply
that two non-proportional classes of the form $[D_{d,\alpha}]$ must
belong to different faces of the cone $\overline{NE}({X_n})$.

\end{proof}

\begin{rem}\label{remgordo}
{\rm

Although the statement of Theorem \ref{gordo} shows that, for each
integer $n\geq 37$, there exist infinitely many rays cutting the
region $\partial \overline{NE}(X_n)\cap
\partial Q(X_n)\cap [K_{X_n}]_{>0}$ (which are spanned by elements of
the form $[D_{d,\alpha}]$, with $(d,\alpha)\in E(n)$), a stronger
fact is implicit in that statement (recall also Remark
\ref{cardinal}): if $f_n$ denotes the maximum of the numbers
$N([D_{d,\alpha}])$, with $(d,\alpha)\in E(n)$ (notice that they
depend only on $d$), then $\lim_{n\rightarrow \infty}
f_n=+\infty$. This implies that, for each fixed positive integer
$k$, there exists an integer $n_0>k$ such that, for each $n\geq
n_0$, one can find infinitely many rays $\gr_{>0}z$ cutting the
region $\partial \overline{NE}(X_n)\cap
\partial Q(X_n)\cap [K_{X_n}]_{>0}$ and such that the number of
exceptional divisors $E$ on the blow-up $X_n$ with $z\cdot [E]>0$
is greater than $k$. }
\end{rem}

Now, we shall use the action of the Cremona group in order to extend
further the result given in the above theorem. Let us recall briefly
this action, referring the reader to \cite{duval} and \cite{do-or}
for more details.

Let $X_n$ be as in the statement of Theorem \ref{gordo}. Following
preceding notations, consider $A(X_n)$ as the direct sum
$\mathbb{Z}[L^{\ck_0(n)}]\oplus \mathbb{Z}[E_1^{\ck_0(n)}]\oplus
\cdots \oplus \mathbb{Z}[E_n^{\ck_0(n)}]$. $A(X_n)$, endowed with
the bilinear pairing defined by the intersection product, is a
hyperbolic lattice. Let $Cr_n$ be the subgroup of $Aut(A(X_n))$
generated by the symmetric group $S_n\hookrightarrow Aut(A(X_n))$
(acting on the last $n$ components) and the reflection
$R:A(X_n)\rightarrow A(X_n)$ defined by $R(x):=x+(x\cdot e ) e$,
where
$e:=[L^{\ck_0(n)}]-[E_1^{\ck_0(n)}]-[E_2^{\ck_0(n)}]-[E_3^{\ck_0(n)}]$.
The group $Cr_n$ is called the {\it Cremona group}.

It is obvious that $Cr_n$ acts on the set of (open) half-lines of
$A(X_n)$ with origin at $0$ ({\it rays}). Moreover it also acts on
the set of nef classes in $A(X_n)$ (see \cite{biran}) and, if $D$ is
a nef divisor on $X_n$ such that $D^2=0$ then $[D]\in
\partial \overline{NE}(X_n)$. These facts show that each one of
the infinite rays $\eta\subseteq \partial \overline{NE}(X_n)\cap
\partial Q(X_n)\cap [K_{X_n}]_{\geq 0}$ provided by Theorem \ref{gordo}
gives rise to infinitely many rays contained in the same set: those
belonging to the orbit of $\eta$ by the action of the Cremona group.
Taking this into account, the last result of the paper completes the
one provided by Theorem \ref{gordo}.

\begin{teo}\label{gordo2}

Let $n\geq 37$ be an integer and set $X_n$ and $S(n)$ as in the
statement of Theorem \ref{gordo}. Let ${\mathcal R}$ be the set of
rays in $A(X_n)$ of the form $\mathbb{R}_{>0}[D_{d,\alpha}]$ such
that $d\in S(n)\setminus \{2,3,4\}$ and $\alpha\in E^d$. Then,  for
each ray $\eta\in {\mathcal R}$, the intersection of ${\mathcal R}$
with the orbit of $\eta$ by the action of $Cr_n$ is finite. In
particular, infinitely many of the rays in $\partial
\overline{NE}(X_n)\cap
\partial Q(X_n)\cap [K_{X_n}]_{>0}$ provided in
Theorem \ref{gordo} are generated by elements which belong to
different orbits of the action of the Cremona group.
\end{teo}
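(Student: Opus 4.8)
The plan is to show that the map $(d,\alpha)\mapsto [D_{d,\alpha}]$ restricted to $\{d\in S(n)\setminus\{2,3,4\}\}\times E^d$ has the property that the $Cr_n$-orbit of any single class $[D_{d,\alpha}]$ meets the full set $\mathcal R$ in only finitely many rays. The key numerical invariant to track is the self-intersection together with the canonical degree: since $D_{d,\alpha}^2=0$, the ray $\mathbb R_{>0}[D_{d,\alpha}]$ lies on $\partial Q(X_n)$, and the action of $Cr_n$ preserves both the intersection pairing and the canonical class $[K_{X_n}]$ (the reflection $R$ fixes $K_{X_n}$ because $e\cdot K_{X_n}=0$, and the permutations in $S_n$ obviously do). Hence every element in the orbit of $[D_{d,\alpha}]$ is a nef class $z$ with $z^2=0$ and $z\cdot[K_{X_n}]=K_{X^d}\cdot D_{d,\alpha}$, a fixed positive integer depending only on $d$ (positive by Proposition \ref{teo1}(b) and the Adjunction Formula, since $d\geq 5$). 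So if $\eta'=\mathbb R_{>0}z$ lies in the orbit of $\eta=\mathbb R_{>0}[D_{d,\alpha}]$ \emph{and} also lies in $\mathcal R$, say $\eta'=\mathbb R_{>0}[D_{d',\alpha'}]$, then $K_{X^{d'}}\cdot D_{d',\alpha'}=K_{X^d}\cdot D_{d,\alpha}$.

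The next step is to argue that this canonical-degree equation, for $d,d'\geq 5$, forces $d'$ to lie in a finite set. One computes $K_{X^d}\cdot D_{d,\alpha}$ explicitly: writing $[D_{d,\alpha}]=m[L]-\sum_p \mu_p[E_p]$ in $A(X^d)$ in terms of the degree $m$ of the rational first integral and the multiplicities $\mu_p$ at the points of $\mathcal B^d$, the Adjunction Formula gives $K_{X^d}\cdot D_{d,\alpha}=2g-2-D_{d,\alpha}^2=2g-2$, where $g$ is the genus of the general invariant curve. By the property of the families $\Upsilon^d$ quoted in Section \ref{3} (for $d\geq 5$, the genus of the general algebraic leaf goes to infinity — more precisely, for each $k$ only finitely many $\alpha\in E^d$ give genus $\leq k$, and moreover the minimum genus over $\alpha\in E^d$ itself tends to infinity with $d$), the value $2g-2$ determines $d$ up to finitely many possibilities, and for each such $d$ determines $\alpha$ up to finitely many possibilities. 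This is the crux: I would need the quantitative statement that $\min\{g(d,\alpha):\alpha\in E^d\}\to\infty$ as $d\to\infty$, which should follow from the degree-growth estimates in \cite{l-n} for the pull-back constructions defining $\Upsilon^{3r-1}$, $\Upsilon^{3r}$, $\Upsilon^{3r+1}$ (the ramified covers $F$ have degree growing with $r$, so the genus of a pulled-back leaf grows by Riemann–Hurwitz).

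Combining: fix $\eta=\mathbb R_{>0}[D_{d,\alpha}]\in\mathcal R$ and set $c:=K_{X^d}\cdot D_{d,\alpha}>0$. Any ray of $\mathcal R$ in the orbit of $\eta$ is of the form $\mathbb R_{>0}[D_{d',\alpha'}]$ with $K_{X^{d'}}\cdot D_{d',\alpha'}=c$; by the previous paragraph there are only finitely many such pairs $(d',\alpha')$, hence only finitely many such rays (distinct pairs give distinct rays by Theorem \ref{gordo}(b), but we do not even need that — finiteness of the pair set already bounds the rays). This proves the first assertion. For the "in particular" clause: Theorem \ref{gordo} produces infinitely many rays in $\mathcal R\subseteq\partial\overline{NE}(X_n)\cap\partial Q(X_n)\cap[K_{X_n}]_{>0}$; since each $Cr_n$-orbit contains only finitely many of them, these infinitely many rays must be distributed among infinitely many distinct orbits.

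The main obstacle I anticipate is pinning down the quantitative genus growth $\min_{\alpha\in E^d} g(d,\alpha)\to\infty$ uniformly in the family index, rather than the weaker "for fixed $d$, genus $\leq k$ only finitely often" that is stated verbatim in Section \ref{3}. If that uniform statement is not directly available, an alternative is to bound the number of orbit elements in $\mathcal R$ using instead the invariant $N([D_{d,\alpha}])=\ell(\mathcal B^d)$ together with the orbit structure of $Cr_n$ on classes with $z^2=0$ — but $Cr_n$ does not preserve $N$, so one would have to control how much $N$ can drop along a reduced (Cremona-minimal) representative of the orbit, which requires the classification of orbits of isotropic nef classes in the hyperbolic lattice. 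The canonical-degree argument via genus is cleaner and is the route I would pursue.
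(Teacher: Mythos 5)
Your strategy---find a $Cr_n$-invariant of each ray and show that only finitely many pairs $(d',\alpha')$ realize a given value---has the right shape, but there is a genuine gap at its first step. You pass from ``$z$ lies in the $Cr_n$-orbit of $[D_{d,\alpha}]$ and $\mathbb{R}_{>0}z=\mathbb{R}_{>0}[D_{d',\alpha'}]$'' to ``$K_{X^{d'}}\cdot D_{d',\alpha'}=K_{X^{d}}\cdot D_{d,\alpha}$''. That would require $z=[D_{d',\alpha'}]$ on the nose, whereas membership of a \emph{ray} in the orbit only gives $z=\lambda[D_{d',\alpha'}]$ for some $\lambda\in\mathbb{Q}_{>0}$, hence $K_{X^{d'}}\cdot D_{d',\alpha'}=c/\lambda$. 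Since a lattice automorphism preserves the content (the gcd of the coordinates), $\lambda$ is the ratio of the contents of the two classes, and the class $m_0[L]-\sum m_i[E_i]$ of an irreducible pencil need not be primitive (Halphen pencils of index $m$, with class $m(3L-E_1-\cdots-E_9)$, are the standard example). The honest orbit invariant of a ray is $[K_{X_n}]\cdot\eta'$ for $\eta'$ the \emph{primitive} generator, i.e.\ $(2g-2)/\gamma$ with $\gamma$ the content; to deduce a bound on the genus from a bound on this quantity you would have to control $\gamma$, hence $m_0$, which your adjunction computation does not provide. This is exactly why the paper works with primitive generators and with the scale-invariant ratio $[K_{X_n}]\cdot\eta'/m_0'$: the proof of Lemma 3 in \cite{l-n} gives $K_{X^d}\cdot D_{d,\alpha}\geq m_0$ (canonical degree at least the degree of the invariant curve), so $[K_{X_n}]\cdot\eta'\geq m_0'$, a bound on $[K_{X_n}]\cdot\eta'$ bounds $m_0'$, and there are only finitely many primitive isotropic nef integral classes with bounded $[L]$-coefficient. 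That inequality from \cite{l-n} is the key quantitative input of the paper's proof and it is absent from your argument.

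By contrast, the obstacle you single out as the crux---needing $\min_{\alpha\in E^d}g(d,\alpha)\to\infty$ uniformly in $d$---is not an obstacle at all: $d$ ranges over $S(n)\setminus\{2,3,4\}$, and $S(n)$ is \emph{finite} for each $n$, since $\ell(\mathcal{B}^d)$ grows quadratically in $d$ by Remark \ref{cardinal}. So finiteness over $d'$ is automatic, and for each fixed $d'\geq 5$ the stated property of $E^{d'}$ (only finitely many $\alpha$ with genus $\leq k$) would indeed finish the argument---if the quantity you were bounding really were the genus. Because of the content issue above, it is not, and repairing this essentially forces you back to the paper's route.
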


\begin{proof}

Given a ray $\eta$ in $A(X_n)$ we shall denote by $\eta'$ to its
{\it primitive} generator, that is, the generator
$a_0[L^{\ck_0(n)}]+\sum_{i=1}^n a_i[E_i^{\ck_0(n)}]$ such that
$\gcd(a_0,a_1,\ldots,a_n)=1$. Since the canonical class is fixed by
the action of $Cr_n$, it is clear that the result follows if we are
able to prove that, for each positive integer $k$, the set
$\{[K_{X_n}]\cdot \eta'\leq k\mid \eta\in {\mathcal R}\}$ is finite.
Let us see this fact.

Choose a positive integer $k$ and, reasoning by contradiction,
assume that the above set is not finite. Let $\eta$ be an arbitrary
ray in ${\mathcal R}$, say generated by
$[D_{d,\alpha}]=m_0[L^{\ck_0(n)}]-\sum_{i=1}^n m_i[E_i^{\ck_0(n)}]$.
Set $\eta'=m'_0[L^{\ck_0(n)}]-\sum_{i=1}^n m'_i[E_i^{\ck_0(n)}]$.
Since the set of possible values of $m'_0$ is unbounded and the set
$$\left\{\frac{[K_{X_n}]\cdot \eta'}{m'_0}\leq \frac{k}{m'_0}\mid \eta\in {\mathcal R}\right\}$$ is not
finite, one has that $\frac{[K_{X_n}]\cdot \eta'}{m'_0}<1$ for some
$\eta\in {\mathcal R}$. But $$\frac{[K_{X_n}]\cdot
\eta'}{m'_0}=\frac{[K_{X_n}]\cdot
[D_{d,\alpha}]}{m_0}=\frac{K_{X^d}\cdot D_{d,\alpha}}{m_0}\geq 1,$$
where the last inequality holds by the proof of Lemma 3 in
\cite{l-n}. This is a contradiction.

\end{proof}

\end{document}